\documentclass{amsart}

\usepackage[all]{xy}
\usepackage{times,fullpage}

% For cross-file-references
\usepackage{xr-hyper}

% Package for hypertext links:
\usepackage{hyperref}

% Theorem environments.
%
\theoremstyle{plain}
\newtheorem{theorem}[subsection]{Theorem}

\newtheorem{lemma}[subsection]{Lemma}
\newtheorem{corollary}[subsection]{Corollary}

\theoremstyle{definition}

\newtheorem{example}[subsection]{Example}

\newtheorem{construction}[subsection]{Construction}
\newtheorem{notation}[subsection]{Notation}
\newtheorem*{conventions*}{Conventions}
\newtheorem{remark}[subsection]{Remark}

\numberwithin{equation}{subsection}
\newcommand{\colim}{\mathop{\mathrm{colim}}}

\begin{document}

\title{Crystalline cohomology and de Rham cohomology}

\author{Bhargav Bhatt}
\author{Aise Johan de Jong}

\begin{abstract}
The goal of this short paper is to give a slightly different perspective
on the comparison between crystalline cohomology and de Rham cohomology.
Most notably, we reprove Berthelot's comparison result without using
pd-stratifications, linearisations, and pd-differential operators.
\end{abstract}

\maketitle

\noindent
Crystalline cohomology is a $p$-adic cohomology theory for varieties in
characteristic $p$ created by Berthelot \cite{Berthelot}. It was designed
to fill the gap at $p$ left by the discovery \cite{SGA4tome3} of
$\ell$-adic cohomology for $\ell \neq p$. The construction of crystalline
cohomology relies on the {\em crystalline} site, which is a better behaved
positive characteristic analogue of Grothendieck's {\em infinitesimal}
site \cite{GrothendieckCrystals}. The motivation for this definition 
comes from Grothendieck's theorem \cite{GrothendieckAlgdeRham} identifying 
infinitesimal cohomology of a complex algebraic variety with its singular 
cohomology (with $\mathbf{C}$-coefficients); in particular, infinitesimal 
cohomology gives a purely algebraic definition of the ``true'' cohomology 
groups for complex algebraic varieties. The fundamental structural result of
Berthelot \cite[Theorem V.2.3.2]{Berthelot} is a direct $p$-adic analogue
of this reconstruction result: the crystalline cohomology of a smooth
$\mathbf{F}_p$-variety $X$  is identified with the
de Rham cohomology of a lift of $X$ to $\mathbf{Z}_p$, provided one exists.
In particular, crystalline cohomology produces the ``correct'' Betti
numbers, at least for liftable smooth projective varieties (and, in fact,
even without liftability by \cite{KatzMessing}).  We defer to
\cite{IllusieCrysCoh} for a detailed introduction, and connections
with $p$-adic Hodge theory.

\medskip\noindent
Our goal in this note is to give a different perspective on the relationship
between de Rham and crystalline cohomology. In particular, we give a
short proof of the aforementioned comparison result
\cite[Theorem V.2.3.2]{Berthelot}; see Theorem \ref{theorem-global-cohomology}.
Our approach replaces Berthelot's differential methods (involving
stratifications and linearisations) with a resolutely
\v{C}ech-theoretic approach. It seems that
Theorem \ref{theorem-global}
is new, although it may have been known to experts in the field. This theorem 
also appears in forthcoming work of Alexander Beilinson \cite{Beilinson}.

\begin{conventions*}
Throughout this note,  $p$ is a fixed prime number.
Our base scheme will be typically be $\Sigma = \mathrm{Spec}(\mathbf{Z}_p)$, though occasionally we discuss the theory over  $\Sigma_e = \mathrm{Spec}(\mathbf{Z}_p/p^e)$ as well (for some $e \geq 1$).
All divided powers will be compatible with the divided powers
on $p\mathbf{Z}_p$.
Modules of differentials on divided power algebras are compatible
with the divided power structure.
A general reference for divided powers and the crystalline site
is \cite{Berthelot}.
\end{conventions*}

\section{Review of modules on the crystalline site}
\label{section-modules}

\noindent
Let $S$ be a $\Sigma$-scheme such that $p$ is locally nilpotent on $S$.
The (small) crystalline site of $S$ is denoted $(S/\Sigma)_{cris}$. Its objects 
are triples $(U,T,\delta)$ where $U \subset S$ is an open subset, 
$U \subset T$ is a nilpotent thickening of $\Sigma$-schemes, and $\delta$ is
a divided power structure on the ideal of $U$ in $T$; the morphisms are the obvious ones, 
while coverings of $(U,T,\delta)$ are induced by Zariski covers of $T$.
The structure sheaf 
$\mathcal{O}_{S/\Sigma}$ of $(S/\Sigma)_{cris}$ is defined by 
$\mathcal{O}_{S/\Sigma}( (U,T,\delta) )= \Gamma(T,\mathcal{O}_T)$.

\medskip\noindent
Given a $\mathbf{Z}_p/p^e$-algebra $B$ and an ideal $J \subset B$ endowed with
divided powers $\delta$, the module of {\em differentials compatible with
divided powers} is the quotient of the module of $\Sigma$-linear differentials 
 by the relations $\text{d}\delta_n(x) = \delta_{n-1}(x)\text{d}(x)$, for
$x \in J$ and $n \geq 1$.
We simply write $\Omega^1_B$ for this module as confusion is unlikely.  The formation of $\Omega^1_B$ commutes with 
localisation on $B$, so the formula
$\Omega^1_{S/\Sigma}( (U,T,\delta) )= \Gamma(T,\Omega^1_T)$ defines a sheaf
$\Omega^1_{S/\Sigma}$ on $(S/\Sigma)_{cris}$.
Like its classical analogue, the sheaf $\Omega^1_{S/\Sigma}$ can also be described via the diagonal as follows. Given an 
object $(U, T, \delta)$ of $(S/\Sigma)_{cris}$, let $(U, T(1), \delta(1))$ be the 
product of $(U, T, \delta)$ with itself in $(S/\Sigma)_{cris}$: the scheme $T(1)$ is simply the divided power envelope of $U \subset T \times_{\Sigma} T$, with $\delta(1)$ being the induced divided power structure. The diagonal 
map $\Delta : T \to T(1)$ is a closed immersion corresponding to an ideal sheaf $\mathcal{I}$ with divided powers,
and we have
$$
\Omega^1_{S/\Sigma}( (U,T,\delta) ) = \Gamma(T, \mathcal{I}/\mathcal{I}^{[2]}),
$$
where $\mathcal{I}^{[2]}$ denotes the second divided power of $\mathcal{I}$. 
For $i \geq 0$ we define $\Omega^i_{S/\Sigma}$ as the $i$-th exterior
power of $\Omega^1_{S/\Sigma}$.

\medskip\noindent
An $\mathcal{O}_{S/\Sigma}$-module $\mathcal{F}$ on
$(S/\Sigma)_{cris}$ is called {\it quasi-coherent} if for every object
$(U, T, \delta)$, the restriction $\mathcal{F}_T$ of $\mathcal{F}$
to the Zariski site of $T$ is a quasi-coherent $\mathcal{O}_T$-module.
Examples include $\mathcal{O}_{S/\Sigma}$ and $\Omega^i_{S/\Sigma}$ for all $i > 0$.

\medskip\noindent
An $\mathcal{O}_{S/\Sigma}$-module $\mathcal{F}$ on
$(S/\Sigma)_{cris}$ is called a {\it crystal in quasi-coherent modules}
if it is quasi-coherent and for every morphism
$f : (U, T, \delta) \to (U', T', \delta')$ the comparison map
$$
c_f : f^*\mathcal{F}_{T'} \to \mathcal{F}_T
$$
is an isomorphism. For example, the sheaf $\mathcal{O}_{S/\Sigma}$ is a crystal
(by fiat), but the sheaves $\Omega^i_{S/\Sigma}$, $i > 0$ are
{\bf not} crystals.

\medskip\noindent
Given a crystal $\mathcal{F}$ in quasi-coherent modules and an object $(U, T, \delta)$, 
the projections define canonical isomorphisms
$$
\text{pr}_1^*\mathcal{F}_T \xrightarrow{c_1}
\mathcal{F}_{T(1)} \xleftarrow{c_2} 
\text{pr}_2^*\mathcal{F}_T.
$$
These comparison maps are functorial in the objects of the crystalline site.
Hence we obtain a canonical map
\begin{equation}
\label{equation-connection}
\nabla : \mathcal{F} \longrightarrow
\mathcal{F} \otimes_{\mathcal{O}_{S/\Sigma}} \Omega^1_{S/\Sigma}
\end{equation}
such that for any object $(U, T, \delta)$ and any section
$s \in \Gamma(T, \mathcal{F}_T)$ we have
$$
c_1(s \otimes 1) - c_2(1 \otimes s)= \nabla(s) \in \mathcal{I}/\mathcal{I}^{[2]} \otimes_{\mathcal{O}_{S/\Sigma}} \mathcal{F}_{T(1)}.
$$
Transitivity of the comparison maps implies this connection is
integrable, hence defines a {\it de Rham complex}
$$
\mathcal{F} \to
\mathcal{F} \otimes_{\mathcal{O}_{S/\Sigma}} \Omega^1_{S/\Sigma} \to
\mathcal{F} \otimes_{\mathcal{O}_{S/\Sigma}} \Omega^2_{S/\Sigma} \to \cdots.
$$
We remark that this complex does not terminate in general.

\section{The de Rham-crystalline comparison for affines}
\label{section-affine}

\noindent
In this section, we discuss the relationship between de Rham and crystalline cohomology (with coefficients) when $S$ is affine. First, we establish some notation that will be used throughout this section.

\begin{notation}
\label{notation-affine}
Assume $S = \mathrm{Spec}(A)$ for a $\mathbf{Z}/p^N$-algebra $A$ (and some $N > 0$). Choose a polynomial algebra $P$ over $\mathbf{Z}_p$ 
and a surjection $P \to A$ with kernel $J$.
Let $D = D_J(P)^\wedge$ be the $p$-adically completed
divided power envelope of $P \to A$. We set $\Omega^1_D := \Omega^1_P \otimes^\wedge_P D$, so $\Omega^1_D/p^e \simeq \Omega^1_{D/p^eD}$. We also set $D(0) = D$ and let
$$
D(n) =
D_{J(n)}(P \otimes_{\mathbf{Z}_p} \cdots \otimes_{\mathbf{Z}_p} P)^\wedge
$$
where $J(n) = \text{Ker}(P \otimes \cdots \otimes P \to A)$
and where the tensor product has $(n + 1)$-factors. For each $e \geq N$ and any $n \geq 0$, we have a natural object $(S,\mathrm{Spec}(D(n)/p^e(n)),\delta(n))$ of $(S/\Sigma)_{cris}$. Using this, for an abelian sheaf $\mathcal{F}$ on $(S/\Sigma)_{cris}$, we define
$$
\mathcal{F}(n) := \lim_{e \geq N} \mathcal{F}( (S,\mathrm{Spec}(D(n)/p^e(n)),\delta(n))).
$$
Each $(S,\mathrm{Spec}(D(n)/p^e(n)),\delta(n))$ is simply the $(n+1)$-fold self-product of $(S,\mathrm{Spec}(D/p^e),\delta)$ in $(S/\Sigma)_{cris}$. Letting $n$ vary, we obtain a natural cosimplicial abelian group (or a cochain complex)
$$
\mathcal{F}(\bullet) := \big(\mathcal{F}(0) \to \mathcal{F}(1) \to \mathcal{F}(2) \cdots \big),
$$
that we call the {\em \v{C}ech-Alexander} complex of $\mathcal{F}$ associated to $D$. 
\end{notation}

\subsection{Some generalities on crystalline cohomology}

\noindent
This subsection collects certain basic tools necessary for working with crystalline cohomomology; these will be used consistently in the sequel. We begin with a brief review of the construction of homotopy-limits in the only context where they appear in this paper.

\begin{construction}
\label{construction-r-lim}
Let $\mathcal{C}$ be a topos. Fix a sequence $T_1 \subset T_2 \subset \cdots T_n \subset \cdots$ of monomorphisms in $\mathcal{C}$. We will construct the functor $R\lim_i R\Gamma(T_i,-)$; here we follow the convention that $\mathcal{G}(U) = \Gamma(U,\mathcal{G}) = \mathrm{Hom}_{\mathcal{C}}(U,\mathcal{G})$ for any pair of objects $U,\mathcal{G} \in \mathcal{C}$. Let $\mathrm{Ab}^{\mathbf{N}}$ denote the category of projective systems of abelian groups indexed by the natural numbers. The functor $\mathcal{F} \mapsto \lim_i \mathcal{F}(T_i)$ can be viewed as the composite
$$
\mathrm{Ab}(\mathcal{C}) \stackrel{ \{\Gamma(T_i,-)\}_i}{\to} \mathrm{Ab}^{\mathbf{N}} \stackrel{\lim_i}{\to} \mathrm{Ab}.
$$
Each of these functors is a left exact functor between abelian categories with enough injectives, so we obtain a composite of (triangulated) derived functors
$$
D^+(\mathrm{Ab}(\mathcal{C})) \stackrel{ \{R\Gamma(T_i,-)\}_i}{\to} D^+(\mathrm{Ab}^{\mathbf{N}}) \stackrel{R\lim_i}{\to} D^+(\mathrm{Ab}).
$$
We use $R\lim_i R\Gamma(T_i,-)$ to denote the composite functor. To identify this functor, observe that if we set $T = \colim_i T_i$, then $\mathcal{F}(T) = \lim_i \mathcal{F}(T_i)$ by adjunction. Moreover, for any injective object $\mathcal{I}$ of $\mathrm{Ab}(\mathcal{C})$, the projective system $i \mapsto \mathcal{I}(T_i)$ has surjective transition maps $\mathcal{I}(T_{i+1}) \to \mathcal{I}(T_i)$:  the maps $T_i \to T_{i+1}$ are injective, and $\mathcal{I}$ is an injective object. Since projective systems in $\mathrm{Ab}^{\mathbf{N}}$ with surjective transition maps are acyclic for the functor $\lim_i$ (by the Mittag-Leffler condition), there is an identification of triangulated functors
$$
R\Gamma(T,-) \simeq R\lim_i R\Gamma(T_i,-).
$$
Thus, the value $R\lim_i R\Gamma(T_i,\mathcal{F})$ is computed by $I^\bullet(T) = \lim_i I^\bullet(T_i)$, where $\mathcal{F} \to I^\bullet$ is an injective resolution. An observation that will be useful in the sequel is the following: if each $R\Gamma(T_i,\mathcal{F})$ is concentrated in degree $0$, then $R\lim_i R\Gamma(T_i,\mathcal{F})$ coincides with $R\lim_i \mathcal{F}(T_i)$, and thus has only two non-zero cohomology groups (as $R^j \lim_i A_i = 0$ for $j > 1$ and {\em any} $\mathbf{N}$-indexed projective system $\{A_i\}_i$ of abelian groups).
\end{construction}

\noindent
We use Construction \ref{construction-r-lim} to show that the \v{C}ech-Alexander complex often computes crystalline cohomology (compare with \cite[Theorem V.1.2.5]{Berthelot}).
\begin{lemma}
\label{lemma-cech}
Let $\mathcal{F}$ be a quasi-coherent $\mathcal{O}_{S/\Sigma}$-module. Assume that for each $n > 0$, the group $R^1 \lim_{e \geq N}$ 
vanishes for the projective system $e \mapsto \mathcal{F}((S, \mathrm{Spec}(D(n)/p^eD(n)), \delta(n)))$.
Then the complex $\mathcal{F}(\bullet)$ computes $R\Gamma(S/\Sigma, \mathcal{F})$.
\end{lemma}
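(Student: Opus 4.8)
The plan is to realize the \v{C}ech--Alexander complex as the output of cohomological descent along a single covering of the final object, and then to collapse the resulting double complex using Construction \ref{construction-r-lim} and the vanishing hypothesis. Write $X^{(e)} = (S, \mathrm{Spec}(D/p^eD), \delta)$ for the object of $(S/\Sigma)_{cris}$ from Notation \ref{notation-affine}, viewed as a sheaf via Yoneda, and set $X = \colim_e X^{(e)}$. The first and most geometric step is to prove that $X \to \ast$ is an epimorphism onto the final sheaf. Given an arbitrary object $(U',T',\delta')$, the scheme $T'$ has $p$ locally nilpotent, so Zariski-locally $p^e = 0$ on $T'$; since $P$ is a polynomial $\mathbf{Z}_p$-algebra and $\mathcal{O}_{T'} \to \mathcal{O}_{U'}$ is surjective, one lifts the composite $U' \to S$ to a $\mathbf{Z}_p$-algebra map $P \to \Gamma(T',\mathcal{O}_{T'})$, which by the universal property of the divided power envelope factors through $D/p^eD$ compatibly with the pd-structures and thus yields a map $(U',T',\delta') \to X^{(e)}$ after a Zariski cover. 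Hence $X \to \ast$ is covering.

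Next I would pass to the \v{C}ech nerve $T_\bullet$ of $X \to \ast$. By the last sentence of Notation \ref{notation-affine}, the $(n+1)$-fold self-product of $X^{(e)}$ is exactly $T_n^{(e)} := (S, \mathrm{Spec}(D(n)/p^eD(n)), \delta(n))$; and since finite limits commute with filtered colimits in a topos, the $(n+1)$-fold self-product of $X$ is $T_n := \colim_e T_n^{(e)}$. As the \v{C}ech nerve of an epimorphism is a hypercovering, cohomological descent gives $R\Gamma(S/\Sigma, \mathcal{F}) \simeq \mathrm{Tot}\big(n \mapsto R\Gamma(T_n, \mathcal{F})\big)$; concretely, for an injective resolution $\mathcal{F} \to I^\bullet$ this is the total complex of the double complex $(n,j) \mapsto I^j(T_n)$.

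I would then compute the columns. For fixed $n$ the maps $T_n^{(e)} \hookrightarrow T_n^{(e+1)}$ are monomorphisms with colimit $T_n$, so Construction \ref{construction-r-lim} identifies the complex $I^\bullet(T_n) = \lim_e I^\bullet(T_n^{(e)})$ with a representative of $R\lim_e R\Gamma(T_n^{(e)},\mathcal{F})$. Because $\mathcal{F}$ is quasi-coherent and each $T_n^{(e)} = \mathrm{Spec}(D(n)/p^eD(n))$ is affine, $R\Gamma(T_n^{(e)},\mathcal{F})$ is concentrated in degree $0$ and equals $\mathcal{F}(T_n^{(e)})$, by the standard reduction of quasi-coherent crystalline cohomology over a representable object to Zariski cohomology on its affine base. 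Invoking the final observation of Construction \ref{construction-r-lim}, the $n$-th column therefore has cohomology $\mathcal{F}(n)$ in degree $0$, the group $R^1\lim_e \mathcal{F}(T_n^{(e)})$ in degree $1$, and nothing else.

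Finally I would run the column-filtration spectral sequence of the double complex. Its first page has $q=0$ row $E_1^{n,0} = \mathcal{F}(n)$ with $d_1$ the alternating sum of the cosimplicial maps, which is precisely the \v{C}ech--Alexander complex $\mathcal{F}(\bullet)$, and $q=1$ row $n \mapsto R^1\lim_e \mathcal{F}(T_n^{(e)})$. The hypothesis is exactly what is needed to annihilate this second row, after which the spectral sequence degenerates and the abutment $R\Gamma(S/\Sigma,\mathcal{F})$ is computed by $\mathcal{F}(\bullet)$. I expect the descent step together with this last bookkeeping to be the main obstacle: one must confirm that the \v{C}ech nerve genuinely computes $R\Gamma(S/\Sigma, \mathcal{F})$, and that the quantified vanishing of $R^1\lim_e$ removes \emph{every} off-diagonal contribution to the abutment — in particular the $E^{0,1}$-term — so that no higher $R\lim$ correction survives.
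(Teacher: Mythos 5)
Your proposal is correct and follows essentially the same route as the paper's proof: realize $\colim_e (S,\mathrm{Spec}(D/p^e),\delta) \to \ast$ as an effective epimorphism, apply cohomological descent along its \v{C}ech nerve, identify the columns via Construction \ref{construction-r-lim} and the vanishing of quasi-coherent cohomology on affines, and kill the $R^1\lim$ row using the hypothesis. The only differences are that you supply details the paper leaves to the reader (the lifting argument for surjectivity onto the final object, and the explicit spectral-sequence bookkeeping, including the correct observation that the $E^{0,1}$-term must also be annihilated).
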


\begin{proof}
As representable functors are sheaves on $(S/\Sigma)_{cris}$ (by Zariski descent), we freely identify objects of $(S/\Sigma)_{cris}$ with the corresponding sheaf on $(S/\Sigma)_{cris}$. One can easily check that the map $\colim_{e \geq N} \Big( (S,\mathrm{Spec}(D/p^e),\delta) \Big)\to \ast$ is an effective epimorphism
is the topos of sheaves on $(S/\Sigma)_{cris}$. Since filtered colimits and the Yoneda embedding both commute with finite products, the $(n+1)$-fold self-product of 
$\colim_{e \geq N} \Big( (S,\mathrm{Spec}(D/p^e),\delta) \Big)$ is simply $\colim_{e \geq N} \Big( (S,\mathrm{Spec}(D(n)/p^e(n)),\delta(n)) \Big)$. General topos theory (see Remark \ref{remark-cech-topos}) shows that $R\Gamma(S/\Sigma,\mathcal{F})$ is computed  by
$$
R\Gamma(\colim_{e \geq N} \Big( (S,\mathrm{Spec}(D(0)/p^e(0)),\delta(0)) \Big),\mathcal{F}) \to R\Gamma(\colim_{e \geq N} \Big( (S,\mathrm{Spec}(D(1)/p^e(1)),\delta(1)) \Big),\mathcal{F}) \to \cdots.
$$
The discussion in Construction \ref{construction-r-lim} and the vanishing of quasi-coherent sheaf cohomology on affine schemes then identify the above bicomplex with the bicomplex
$$
R\lim_{e \geq N} \big( \mathcal{F}((S, \mathrm{Spec}(D(0)/p^eD(0)), \delta(0)))\big) \to
R\lim_{e \geq N}  \big(\mathcal{F}((S, \mathrm{Spec}(D(1)/p^eD(1)), \delta(1)))\big) \to \cdots.
$$
The $R^1 \lim_e$ vanishing hypothesis ensures that the bicomplex above collapses to $\mathcal{F}(\bullet)$  proving the claim.
\end{proof}

\begin{remark}
\label{remark-cech-topos}
The following fact was used in the proof of Lemma \ref{lemma-cech}: if $\mathcal{C}$ is a topos, and $X \to \ast$ is an effective epimorphism in $\mathcal{C}$, then for any abelian sheaf $\mathcal{F}$ in $\mathcal{C}$, the object $R\Gamma(\ast,\mathcal{F})$ is computed by a bicomplex
$$
R\Gamma(X,\mathcal{F}) \to R\Gamma(X \times X, \mathcal{F}) \to R\Gamma(X \times X \times X,\mathcal{F}) \to \cdots,
$$
i.e., the choice of an injective resolution $\mathcal{F} \to I^\bullet$ defines a bicomplex $I^\bullet(X^{\bullet+1})$ whose totalisation computes $R\Gamma(\ast,\mathcal{F})$; this follows from cohomological descent since the augmented simplicial object $X^\bullet \to \ast$ is a hypercover. In particular, there is a spectral sequence with $E_1$-term given by $H^q(X^p,\mathcal{F})$ that converges to $H^{p+q}(\ast,\mathcal{F})$.
\end{remark}

\begin{remark}
The $R^1 \lim_{e \geq N}$ vanishing assumption of Lemma \ref{lemma-cech} will hold for all sheaves appearing in this paper. For quasi-coherent {\em crystals} $\mathcal{F}$, this assumption clearly holds as 
$$
\mathcal{F}( (S,\mathrm{Spec}(D(n)/p^e(n)),\delta(n)) ) \to \mathcal{F}( (S,\mathrm{Spec}(D(n)/p^{e-1}(n)),\delta(n)) )
$$ 
is surjective for all $e > N$ and all $n \geq 0$. By direct computation, the same is also true for the sheaves $\Omega^i_{S/\Sigma}$.
\end{remark}

\noindent
Next, we formulate and prove a purely algebraic lemma comparing $p$-adically complete $\mathbf{Z}_p$-modules with compatible systems of $\mathbf{Z}/p^e$-modules; the result is elementary and well-known, but recorded here for convenience. We remind the reader that a $\mathbf{Z}_p$-module $M$ is said to be {\em $p$-adically complete} if the natural map $M \to \lim_e M/p^e$ is an isomorphism. 

\begin{lemma}
\label{lemma-p-adic-limit}
The functor $M \mapsto (M/p^e,\mathrm{can}_e:\big(M/p^{e+1})/p^e \simeq M/p^e)$ defines an equivalence between the category of $p$-adically complete $\mathbf{Z}_p$-modules $M$ and the category of projective systems $(M_e,\phi_e)$ (indexed by $e \in \mathbf{N}$) with $M_e$ a $\mathbf{Z}/p^e$-module, and $\phi_e:M_{e+1}/p^e \simeq M_e$ an isomorphism.
\end{lemma}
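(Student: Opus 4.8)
The plan is to exhibit an explicit quasi-inverse. Define a functor $G$ in the opposite direction by sending a projective system $(M_e,\phi_e)$ to $M := \lim_e M_e$, where the transition maps are the composites $M_{e+1} \twoheadrightarrow M_{e+1}/p^e M_{e+1} \xrightarrow{\phi_e} M_e$. Writing $F$ for the functor in the statement, I would check that $F$ and $G$ are mutually inverse by producing natural isomorphisms $G \circ F \simeq \mathrm{id}$ and $F \circ G \simeq \mathrm{id}$. The first is immediate: for a $p$-adically complete $M$ one has $G(F(M)) = \lim_e M/p^e M = M$ by the very definition of completeness. Thus the entire content is the second isomorphism, namely that for $M = \lim_e M_e$ the natural maps $M/p^e M \to M_e$ are isomorphisms compatibly in $e$; granting this, the same computation shows $\lim_e M/p^e M = \lim_e M_e = M$, so that $G$ indeed lands in complete modules.

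To analyse $M = \lim_e M_e$ I would first record the structure of the system. Since each $\phi_e$ is an isomorphism, the transition maps $M_{e+1}\to M_e$ are surjective, and a short induction on $f-g$ (using that $M_f$ is a $\mathbf{Z}/p^f$-module) gives $\ker(M_f \to M_g) = p^g M_f$ together with an isomorphism $M_f/p^g M_f \simeq M_g$ for all $f \geq g$. Surjectivity of the transition maps makes $\pi_e\colon M \to M_e$ surjective, hence so is the induced map $M/p^e M \to M_e$. For injectivity, an element $(x_f)_f$ of $\ker \pi_e$ satisfies $x_e = 0$, which by the kernel computation forces $x_f \in p^e M_f$ for every $f$; thus $\ker \pi_e = \lim_f p^e M_f$, and the remaining task is the identity $\lim_f p^e M_f = p^e M$.

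The hard part is precisely this last identity, which I would attack with the $R\lim$ formalism of Construction \ref{construction-r-lim}. Applying $\lim_f$ to the short exact sequence of projective systems
$$
0 \to (M_f[p^e])_f \to (M_f)_f \xrightarrow{p^e} (p^e M_f)_f \to 0
$$
and using that $(M_f)_f$ has surjective transition maps (so $R^1\lim_f M_f = 0$) identifies the cokernel of $p^e\colon M \to \lim_f p^e M_f$ with $R^1\lim_f M_f[p^e]$. Hence $\lim_f p^e M_f = p^e M$ exactly when $R^1\lim_f M_f[p^e] = 0$, and I expect verifying this Mittag--Leffler vanishing to be the main obstacle. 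The key computation is that the images of the transition maps $M_{f'}[p^e] \to M_f[p^e]$ stabilise once $f' \geq f+e$: given such an $f'$ and a class $y \in M_{f'}[p^e]$, lift it arbitrarily to $\tilde y \in M_{f'+1}$, write $p^e \tilde y = p^{f'} w$ (possible since $p^e\tilde y$ dies in $M_{f'}$, hence lies in $p^{f'}M_{f'+1}$), and replace $\tilde y$ by $\tilde y - p^{f'-e} w$. This correction is killed by $p^e$, and because $f'-e \geq f$ it also vanishes in $M_f$, so the image of $y$ in $M_f$ lifts to $p^e$-torsion arbitrarily far up the system. This stabilisation yields the Mittag--Leffler condition, hence $R^1\lim_f M_f[p^e] = 0$, completing the argument.
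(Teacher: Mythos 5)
Your proof is correct, and it diverges from the paper's at exactly the point where all the content lives. Both arguments use the same quasi-inverse $(M_e,\phi_e)\mapsto \lim_e M_e$, dispose of $G\circ F\simeq\mathrm{id}$ by the definition of completeness, and reduce $F\circ G\simeq\mathrm{id}$ to showing that the kernel of $\pi_e:\lim_f M_f\to M_e$ is contained in $p^e\lim_f M_f$. The paper handles this last containment by a direct successive-approximation argument: given $m$ with $m_e=0$, it iteratively constructs better and better candidates for a $p^e$-th ``divisor'' of $m$, correcting at each finite level and passing to the limit (a step it leaves rather terse). You instead package the same difficulty homologically: you identify $\ker\pi_e$ with $\lim_f p^e M_f$, compare this with $p^e M$ via the six-term $\lim$--$R^1\lim$ sequence attached to $0\to M_f[p^e]\to M_f\to p^e M_f\to 0$, and kill the obstruction $R^1\lim_f M_f[p^e]$ by verifying Mittag--Leffler for the torsion subsystem, with the correct stabilisation bound $f'\geq f+e$ and a valid lifting-and-correcting computation. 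The two proofs are cousins --- your ML correction $\tilde y\mapsto \tilde y-p^{f'-e}w$ is morally the same move as the paper's iterative refinement --- but yours has the advantage of isolating the convergence issue into a clean, checkable vanishing statement (and fits the $R\lim$ formalism the paper sets up in Construction \ref{construction-r-lim}), at the cost of being longer; the paper's is shorter but sweeps the convergence of its approximating sequence under the phrase ``taking the limit $i\to\infty$.''
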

\begin{proof}
A left-inverse functor is given by $(M_e,\phi_e) \mapsto M := \lim_e M_e$, the limit being taken along the maps $\phi_e$. To check that this is also a right inverse, it suffices to show that $(\lim_e M_e)/p^n \simeq M_n$ for any system $(M_e,\phi_e)$ as in the lemma. Projection defines a natural map $(\lim_e M_e)/p^n \to M_n$ which is surjective as the $\phi_e$'s are all surjective. For injectivity, it suffices to show that any element $m = (m_e) \in \lim_e M_e$ with $m_n = 0$ is divisible by $p^n$ in $\lim_e M_e$. The hypothesis implies that there exists an $m' \in \lim_e M_e$ such that $m - p^n m'$ maps to $0$ in $M_{n+1}$: we can simply take $m'$ to be an arbitrary lift of an element in $M_{n+1}$ which gives $m_{n+1}$ on multiplication by $p^n$ (which exists since $\phi_{e+1}$ maps $M_{n+1}/p^n$ isomorphically onto $M_n$). Continuing this process, for each $i > 0$, we can find an element $m_i \in \lim_e M_e$ such that $m - p^n m_i$ maps to $0$ in $M_{n+i}$. Taking the limit $i \to \infty$ proves the desired claim.
\end{proof}

\noindent
The next lemma is a standard result in crystalline cohomology (see \cite[Chapter IV]{Berthelot} and \cite[Theorem 6.6]{BerthelotOgusBook}). We
sketch the proof to convince the reader that this result is elementary.

\begin{lemma}
\label{lemma-category-crystals-quasi-coherent}
The category of crystals in quasi-coherent $\mathcal{O}_{S/\Sigma}$-modules
is equivalent to the category of pairs $(M, \nabla)$ where $M$ is a
$p$-adically complete $D$-module and
$\nabla : M \to M \otimes^\wedge_D \Omega^1_D$ is a topologically
quasi-nilpotent integrable connection.
\end{lemma}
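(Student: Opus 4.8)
The strategy is to present both categories as categories of descent data along the cover of the final object of the crystalline topos furnished by the divided power envelope, and then to translate the gluing data into a connection by expanding along the diagonal. Recall from the proof of Lemma \ref{lemma-cech} that $\colim_{e \geq N}(S, \mathrm{Spec}(D/p^e), \delta) \to \ast$ is an effective epimorphism whose $(n+1)$-fold self-product is $\colim_{e \geq N}(S, \mathrm{Spec}(D(n)/p^e(n)), \delta(n))$. By descent, a quasi-coherent crystal $\mathcal{F}$ is the same datum as a quasi-coherent module on this cover together with a descent datum; evaluating and taking limits identifies the module with a $p$-adically complete $D$-module $M$, and identifies the descent datum with an isomorphism $\epsilon$ between the two pullbacks of $M$ to $D(1)$ satisfying the cocycle condition over $D(2)$ (an HPD-stratification). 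Thus it suffices to match such stratifications with topologically quasi-nilpotent integrable connections.

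In one direction, given a crystal $\mathcal{F}$, write $M_e := \mathcal{F}((S, \mathrm{Spec}(D/p^e), \delta))$, a $D/p^e$-module. The crystal property applied to the morphisms $\mathrm{Spec}(D/p^e) \to \mathrm{Spec}(D/p^{e+1})$ gives isomorphisms $M_{e+1}/p^e \cong M_e$, so by Lemma \ref{lemma-p-adic-limit} the limit $M := \mathcal{F}(0) = \lim_e M_e$ is a $p$-adically complete $D$-module with $M/p^e \cong M_e$. The canonical map \eqref{equation-connection}, evaluated on these objects, equips $M$ with an integrable connection $\nabla$, integrability being forced by the transitivity of the comparison maps over $D(2)$.

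The heart of the argument is the Taylor-expansion dictionary between $\epsilon$ and $\nabla$, which I would carry out modulo $p^e$ and then pass to the limit. Writing $P = \mathbf{Z}_p[x_i]$ and $\xi_i = x_i \otimes 1 - 1 \otimes x_i$, the divided power algebra $D(1)/p^e$ is the pd-polynomial algebra over $D/p^e$ with topological basis the divided power monomials $\xi^{[\alpha]}$, dual to the basis $\mathrm{d}x_i$ of $\Omega^1_D$. A stratification then expands as
$$
\epsilon(m) = \sum_{\alpha} \nabla_\alpha(m)\,\xi^{[\alpha]},
$$
its degree-one part recovering $\nabla$; the cocycle condition over $D(2)$ is equivalent to integrability of $\nabla$ together with the formulas expressing the higher operators $\nabla_\alpha$ through $\nabla$; and quasi-nilpotence of $\nabla$ modulo $p^e$ is precisely what makes this sum finite, so that $\epsilon$ is a well-defined automorphism. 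Passing to the limit over $e$ via Lemma \ref{lemma-p-adic-limit}, topological quasi-nilpotence corresponds exactly to convergence of the series in the $p$-adically completed setting. Running this correspondence in both directions establishes the equivalence between HPD-stratifications on $M$ and topologically quasi-nilpotent integrable connections; in particular the $\nabla$ produced in the previous paragraph is automatically quasi-nilpotent.

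To build the inverse functor explicitly, and to confirm that the descent really produces a crystal, given $(M, \nabla)$ I would first form $\epsilon$ by the Taylor series above, and then, for an arbitrary object $(U, T, \delta)$, work Zariski-locally on $T$: since $P$ is a polynomial algebra and $\Gamma(T, \mathcal{O}_T) \to \Gamma(U, \mathcal{O}_U)$ is surjective, the structure map lifts to $P \to \Gamma(T, \mathcal{O}_T)$, and the universal property of the divided power envelope extends it to $g : T \to \mathrm{Spec}(D/p^e)$. Setting $\mathcal{F}_T := g^* M$, any two lifts factor through $D(1)$ and are identified by $\epsilon$, while the cocycle condition over $D(2)$ guarantees compatibility on triple overlaps, so the various $g^* M$ glue to a well-defined quasi-coherent crystal whose comparison maps are supplied by $\epsilon$. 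The main obstacle is the dictionary of the previous paragraph, namely showing that the cocycle condition is equivalent to integrability and that quasi-nilpotence governs the convergence of the Taylor series; verifying independence of the local lift $g$ is the only other genuinely geometric point, and the remaining check that the two functors are mutually inverse is formal given the effective-epimorphism description of the topos.
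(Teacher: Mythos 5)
Your proposal is correct, and its engine is the same as the paper's: the Taylor series indexed by divided powers of the differences of coordinates, with topological quasi-nilpotence guaranteeing convergence and integrability guaranteeing the cocycle identity. The organizational route differs, though. You interpose the formalism of HPD-stratifications: descend quasi-coherent modules along the Čech nerve of the cover $\colim_e (S,\mathrm{Spec}(D/p^e),\delta)\to\ast$, identify descent data with stratifications $\epsilon$ over $D(1)$ satisfying a cocycle condition over $D(2)$, and then invoke the (standard, Berthelot--Ogus) dictionary between stratifications and quasi-nilpotent integrable connections. The paper deliberately skips this intermediate category: given $(M,\nabla)$ it directly \emph{defines} $\mathcal{F}((S,T,\delta)) = M\otimes_D B$ for an arbitrary affine object via any pd-lift $\varphi:D\to B$ of $\mathrm{id}_A$, and uses the Taylor formula only to show independence of $\varphi$ (with the cocycle identity for three lifts giving well-definedness); the stratification never appears as an object in its own right, consistent with the paper's stated aim of avoiding pd-stratifications. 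What your version buys is a cleaner conceptual statement (crystals are descent data for the cover), at the cost of needing to justify effective descent of quasi-coherent crystals along that cover --- a categorical statement stronger than the cohomological descent used in Lemma \ref{lemma-cech} --- and of needing the full stratification--connection dictionary rather than just the two- and three-lift cases. In the end your last paragraph (local lifts $g:T\to\mathrm{Spec}(D/p^e)$, gluing via $\epsilon$, independence of the lift) reproduces exactly the paper's construction, so the detour is harmless but not load-bearing.
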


\begin{proof}
Given a crystal in quasi-coherent modules $\mathcal{F}$ we set
$$
M = \mathcal{F}(0) := \lim_{e \geq N} \mathcal{F}((S, \mathrm{Spec}(D/p^eD), \delta))
$$
and $\nabla$ is as in (\ref{equation-connection}). Conversely,
suppose that $(M, \nabla)$ is a module with connection as in the statement
of the lemma. Then, given an affine object $(S \hookrightarrow T, \delta)$ of the
crystalline site corresponding to the divided power thickening
$(B \to A, \delta)$, we set
$$
\mathcal{F}((S, T, \delta)) = M \otimes_D B
$$
where $D \to B$ is any divided power map lifting $\text{id}_A : A \to A$.
Note that $p^mB = 0$ for some $m \geq 0$ by the definition of the crystalline 
site, so completion isn't needed in the formula. To see that this is well
defined suppose that $\varphi_1, \varphi_2 : D \to B$ are two maps
lifting $\text{id}_A$. Then we have an isomorphism
$$
M_{\phi_1,\phi_2}:M \otimes_{D, \varphi_1} B \longrightarrow M \otimes_{D, \varphi_2} B
$$
which is $B$-linear and characterized by the (Taylor) formula
$$
m \otimes 1 \longmapsto
\sum\nolimits_{E = (e_i)}
\left(\prod (\nabla_{\vartheta_i})^{e_i}\right)(m) \otimes
\prod \delta_{e_i}(h_i)
$$
where the sum is over all multi-indices $E$ with finite support.
The notation here is: $P = \mathbf{Z}_p[\{x_i\}_{i \in I}]$,
$\vartheta_i = \partial/\partial x_i$ and
$h_i = \varphi_2(x_i) - \varphi_1(x_i)$.
Since $h_i \in \text{Ker}(B \to A)$ it makes sense to apply the divided
powers $\delta_e$ to $h_i$.
The sum converges precisely because the connection is topologically
quasi-nilpotent (this can be taken as the definition). For three maps
$\varphi_1,\varphi_2,\varphi_3:D \to B$ lifting $\text{id}_A$, the resulting isomorphisms 
satisfy the cocycle condition
$$
M_{\phi_2,\phi_3} \circ M_{\phi_1,\phi_2} = M_{\phi_1,\phi_3}
$$
by the flatness of $\nabla$. Hence, 
the above recipe defines a sheaf on $(S/\Sigma)_{cris}$.
\end{proof}

\begin{remark}
\label{remark-replace-by-smooth-category-crystals}
Lemma \ref{lemma-category-crystals-quasi-coherent} remains valid if we replace the polynomial algebra $P$ appearing in Notation \ref{notation-affine} with any {\em smooth} $\mathbf{Z}_p$-algebra $P$ equipped with a surjection to $A$ (and $D$ with the corresponding $p$-adically completed divided power envelope). The only non-obvious point is to find a replacement for the Taylor series appearing in the formula for $M_{\phi_1,\phi_2}$ in the proof of Lemma \ref{lemma-category-crystals-quasi-coherent}. However, note first that the Taylor series makes sense as soon as there is a polynomial algebra $F$ and an \'etale map $F \to P$. Moreover, a ``change of variables'' computation shows that the resulting map is independent of choice of \'etale chart $F \to P$. The general case then follows by Zariski glueing.
\end{remark}

\noindent
We use Lemma \ref{lemma-category-crystals-quasi-coherent} to show that crystals on $(S/\Sigma)_{cris}$ are determined by their restriction to the special fibre.

\begin{corollary}
\label{corollary-reduce-modulo-p}
Reduction modulo $p$ gives an equivalence between categories of crystals of quasi-coherent modules over the structure sheaves of $(S/\Sigma)_{cris}$  and $(S \otimes_{\Sigma} \mathrm{Spec}(\mathbf{F}_p)/\Sigma)_{cris}$ 
\end{corollary}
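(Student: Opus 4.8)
The plan is to reduce to the affine setting of Notation \ref{notation-affine} and then read off the equivalence from the module-theoretic description of crystals in Lemma \ref{lemma-category-crystals-quasi-coherent}. Since the assertion is local on $S$ and reduction modulo $p$ commutes with Zariski localisation, I would first reduce to the case $S = \mathrm{Spec}(A)$ with $A$ a $\mathbf{Z}/p^N$-algebra; set $S_0 = S \otimes_\Sigma \mathrm{Spec}(\mathbf{F}_p) = \mathrm{Spec}(A/pA)$. As $p^N = 0$ in $A$, the closed immersion $S_0 \hookrightarrow S$ is a nilpotent thickening, and ``reduction modulo $p$'' is the pullback of crystals along the resulting morphism of crystalline topoi $(S_0/\Sigma)_{cris} \to (S/\Sigma)_{cris}$.

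I would fix a presentation $P \to A$ with kernel $J$ as in Notation \ref{notation-affine}, and set $D = D_J(P)^\wedge$. The composite $P \to A \to A/pA$ presents $A_0 := A/pA$ with kernel $J_0 = J + pP$; write $D_0 = D_{J_0}(P)^\wedge$ for the corresponding ring governing crystals on $S_0$. The key step is the identification $D_0 = D$ together with $\Omega^1_{D_0} = \Omega^1_D$. Since $P$ is $p$-torsion free and all divided powers are required to be compatible with the divided powers $\gamma$ on $p\mathbf{Z}_p$, the ideal $pP$ already carries divided powers, namely $\gamma_n(pf) = (p^n/n!)\,f^n \in P$; consequently adjoining divided powers along $J + pP$ yields nothing beyond adjoining them along $J$, so the canonical map $D_J(P) \to D_{J+pP}(P)$ is an isomorphism of underlying rings (the two divided power envelopes differing only in the size of their divided power ideals). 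Passing to $p$-completions gives $D_0 = D$, and as $\Omega^1_D = \Omega^1_P \otimes^\wedge_P D$ depends only on this ring and the map from $P$, also $\Omega^1_{D_0} = \Omega^1_D$.

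Granting this, Lemma \ref{lemma-category-crystals-quasi-coherent} applied to $S$ and to $S_0$ exhibits both categories of crystals as equivalent to one and the same category: that of pairs $(M, \nabla)$ with $M$ a $p$-adically complete $D$-module and $\nabla : M \to M \otimes^\wedge_D \Omega^1_D$ a topologically quasi-nilpotent integrable connection (the quasi-nilpotence condition itself being phrased purely in terms of $(D, \Omega^1_D)$, hence literally the same on both sides). It then remains only to verify that this abstract equivalence is the one induced by reduction modulo $p$. Here I would observe that the object $(S, \mathrm{Spec}(D/p^e), \delta)$ of $(S/\Sigma)_{cris}$ restricts along $S_0 \hookrightarrow S$ to the object $(S_0, \mathrm{Spec}(D/p^e), \delta_0)$ of $(S_0/\Sigma)_{cris}$ — the same thickening $\mathrm{Spec}(D/p^e)$, now regarded as a divided power thickening of $S_0$ via the compatible extension of $\delta$ — and that, for a crystal, the value of its reduction on the latter object agrees with its own value on the former. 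Taking the limit over $e \geq N$ shows that the pair attached to the reduction of a crystal $\mathcal{F}$ is exactly the pair $(M, \nabla)$ attached to $\mathcal{F}$, so reduction modulo $p$ realises the identity on pairs and is therefore an equivalence.

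The hard part will be the divided power computation $D = D_{J+pP}(P)^\wedge$ of the second paragraph, which is precisely what makes the statement special to reduction modulo $p$ (rather than to an arbitrary nilpotent thickening of $S$); the remaining bookkeeping is formal once the equivalence of Lemma \ref{lemma-category-crystals-quasi-coherent} is in hand.
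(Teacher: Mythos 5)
Your proposal is correct and takes essentially the same route as the paper: both deduce the result from Lemma \ref{lemma-category-crystals-quasi-coherent} together with the identification of the divided power envelopes of $P \to A$ and $P \to A/p$, which the paper obtains by citing Berthelot's Proposition I.2.8.2 and you prove directly via the observation that $pP$ already carries divided powers compatible with those on $p\mathbf{Z}_p$ (valid since $P$ is $p$-torsion free). The additional bookkeeping you supply --- checking that the resulting abstract equivalence of pairs $(M,\nabla)$ is actually realised by the restriction functor --- is left implicit in the paper but is the right thing to verify.
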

\begin{proof}
This follows from Lemma \ref{lemma-category-crystals-quasi-coherent} by identifying, for any $e \geq N$, the divided power envelopes of $P/p^e \to A$ and $P/p^e \to A \to A/p$ (by \cite[Proposition I.2.8.2]{Berthelot}).
\end{proof}

\subsection{The main theorem in the affine case}

\noindent
The goal of this section is to prove the following theorem.

\begin{theorem}
\label{theorem-compute-by-de-Rham}
Suppose that $\mathcal{F}$ corresponds to a pair $(M, \nabla)$
as in Lemma \ref{lemma-category-crystals-quasi-coherent}. Then there is a natural quasi-isomorphism
$$
R\Gamma(S/\Sigma, \mathcal{F})
\simeq
(M \to M \otimes^\wedge_D \Omega^1_D \to
M \otimes^\wedge_D \Omega^2_D \to \cdots).
$$
\end{theorem}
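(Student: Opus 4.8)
The plan is to reduce the theorem to a purely algebraic comparison and then run a filtration argument powered by the divided-power Poincaré lemma. First I would invoke Lemma \ref{lemma-cech}: since $\mathcal{F}$ is a crystal in quasi-coherent modules the $R^1\lim$ hypothesis holds (as recorded in the Remark following that lemma), so $R\Gamma(S/\Sigma,\mathcal{F})$ is computed by the \v{C}ech--Alexander complex $\mathcal{F}(\bullet)$. Next I would use the crystal property together with the comparison isomorphisms to identify $\mathcal{F}(n)\cong M\otimes^\wedge_D D(n)$ via the zeroth projection, and I would record the structural fact that $D(n)$ is the completed divided-power polynomial algebra $D\langle \xi^{(j)}_i\rangle^\wedge$ over $D$ on the variables $\xi^{(j)}_i=x^{(j)}_i-x^{(0)}_i$ for $1\le j\le n$. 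With these identifications the theorem becomes the assertion that there is a natural quasi-isomorphism of complexes of $D$-modules
$$
\mathcal{F}(\bullet)\simeq \big(M\to M\otimes^\wedge_D\Omega^1_D\to M\otimes^\wedge_D\Omega^2_D\to\cdots\big).
$$

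The bridge between the two complexes is the diagonal description of differentials from Section \ref{section-modules}: under $\Omega^1_D=\mathcal{I}/\mathcal{I}^{[2]}$ the class $dx_i$ corresponds to $\xi_i^{(1)}$. Unwinding the Taylor formula in the proof of Lemma \ref{lemma-category-crystals-quasi-coherent}, the \v{C}ech--Alexander differential $\partial:\mathcal{F}(n)\to\mathcal{F}(n+1)$ sends $m$ to $m\otimes 1$ minus its Taylor expansion, that is, to $-\sum_{|E|\ge 1}(\prod_i\nabla_{\vartheta_i}^{e_i})(m)\otimes\prod_i\delta_{e_i}(\xi_i)$, whose leading ($|E|=1$) term is, up to sign, exactly the de Rham differential $\nabla(m)$ under $\xi\leftrightarrow dx$. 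Accordingly I would filter $\mathcal{F}(\bullet)$ by total divided-power degree in the $\xi$-variables and identify the lowest graded piece of the normalised complex in cohomological degree $q$ with $M\otimes^\wedge_D\Omega^q_D$ (the $q$ distinct ``slots'' $\xi^{(1)},\dots,\xi^{(q)}$, each entering in degree one, antisymmetrise to $\wedge^q$, as in an Eilenberg--Zilber computation). The symbol of $\partial$ on this lowest piece is the de Rham differential, which is what produces the comparison map.

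The main work -- and the step I expect to be the real obstacle -- is to show that this comparison is a quasi-isomorphism. Here I would analyse the associated graded complex: after passing to $\mathrm{gr}$ the differential becomes its symbol, and the pieces above the leading one are governed by the relative divided-power de Rham complex $\Omega^\bullet_{D\langle\xi\rangle/D}$. The divided-power Poincaré lemma -- the statement that this relative complex is a resolution of $D$ -- kills all the higher pieces and shows the comparison induces an isomorphism on associated graded, so that the filtration spectral sequence degenerates to $M\otimes^\wedge_D\Omega^\bullet_D$. Two points require genuine care: respecting the filtration while keeping track of the full differential (not merely its symbol), and controlling convergence of the resulting infinite sums. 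Both are handled exactly as the convergence of the Taylor series in Lemma \ref{lemma-category-crystals-quasi-coherent}, using $p$-adic completeness of $M$ and the topological quasi-nilpotence of $\nabla$; it is cleanest to treat the universal case $\mathcal{F}=\mathcal{O}_{S/\Sigma}$ first (where $M=D$) and then carry $M$ along through the operators $\nabla^E$.
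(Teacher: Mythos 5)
Your opening moves coincide with the paper's: Lemma \ref{lemma-cech} reduces the statement to the \v{C}ech--Alexander complex, and $\mathcal{F}(n)\cong M\otimes^\wedge_D D(n)$ with $D(n)$ a completed divided power polynomial $D$-algebra. After that you diverge genuinely: the paper never filters $\mathcal{F}(\bullet)$ itself, but instead forms the bicomplex $M^{n,m}=M(n)\otimes^\wedge_{D(n)}\Omega^m_{D(n)}$ and plays its two spectral sequences against each other --- the columns are identified with $M\otimes^\wedge_D\Omega^\bullet_D$ by the relative divided power Poincar\'e lemma (Lemma \ref{lemma-poincare}), and the rows with $m>0$ are killed by an explicit cosimplicial null-homotopy of $\Omega^1_{D(\bullet)}$ (Lemma \ref{lemma-vanishing-omega-1}, via Example \ref{example-cosimplicial-module}). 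That arrangement exists precisely to avoid the computation your argument meets head-on.

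The gap is at your decisive step. The relative de Rham complex $\Omega^\bullet_{D(n)/D}$ does not occur anywhere in the associated graded of the $\xi$-degree filtration on $\mathcal{F}(\bullet)$: that associated graded is, in cosimplicial degree $n$ and weight $d$, the degree-$d$ divided power part of $D\langle\xi_i^{(j)}\rangle$, i.e.\ $M$ tensored with $\Gamma^d_D(C_n)$ where $C_n=\bigoplus_{j=1}^{n}\bigoplus_{i}D\,\xi_i^{(j)}$, carrying only the induced \v{C}ech differential (the forms direction is simply absent). What you actually need is that $H^n\big(\Gamma^d(C_\bullet)\big)$ equals $\Omega^d_D$ for $n=d$ and vanishes otherwise. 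For $d=1$ this follows from the exact sequence exhibiting $C_\bullet$ as the ``reduced'' part of the contractible cosimplicial module of Example \ref{example-cosimplicial-module}; for $d\geq 2$ it is the d\'ecalage identity (derived divided powers of a shift compute shifted exterior powers), which is a genuine theorem --- essentially equivalent to the exactness of the divided-power Koszul complex --- and is not the Poincar\'e lemma you invoke. Citing ``the divided-power Poincar\'e lemma kills the higher pieces'' therefore does not close this step. Separately, convergence of your filtration spectral sequence is not covered by the convergence of the Taylor series: $D(n)$ is separated but not complete for the $\xi$-degree filtration (a $p$-adically convergent series $\sum_E a_E\prod_i\delta_{e_i}(\xi_i)$ has no reason to converge $\xi$-adically), so degeneration of the associated graded does not by itself identify the abutment with $H^*(\mathcal{F}(\bullet))$; one would have to work modulo $p^e$ and argue separately there. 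Both difficulties are real, both are surmountable with added work (d\'ecalage plus a completeness argument), and both are exactly what the paper's bicomplex, with its two elementarily degenerating spectral sequences, is designed to sidestep.
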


\noindent
For $\mathcal{F}$ and $M$ as in Theorem \ref{theorem-compute-by-de-Rham}, we use $M(n)$ and 
$M(\bullet)$ instead of $\mathcal{F}(n)$ and $\mathcal{F}(\bullet)$ from Notation \ref{notation-affine}. Each $M(n)$ is
a $D(n)$-module with integrable connection as in \eqref{equation-connection},  so it defines a de Rham complex
$$
M(n) \to M(n) \otimes^\wedge_{D(n)} \Omega^1_{D(n)} \to
M(n) \otimes^\wedge_{D(n)} \Omega^2_{D(n)} \to \cdots.
$$
As $n$ varies, these complexes fit together to define a bicomplex, which we call the de Rham complex of $M(\bullet)$.  Our proof of Theorem \ref{theorem-compute-by-de-Rham} hinges on the observation that each side of the quasi-isomorphism occurring in the statement of Theorem \ref{theorem-compute-by-de-Rham} also appears in the de Rham complex of $M(\bullet)$: the left side is the $0$-th row, while the right side is the $0$-th column. Thus, the proof of Theorem \ref{theorem-compute-by-de-Rham} is reduced to certain acyclicity results for the de Rham complex of $M(\bullet)$, which we show next. The following lemma shows that the ``columns'' of this bicomplex are all quasi-isomorphic.

\begin{lemma}
\label{lemma-poincare}
The map of complexes
$$
M \otimes^\wedge_D \Omega^*_D \to M(n) \otimes^\wedge_{D(n)} \Omega^*_{D(n)}
$$
induced by any of the natural maps $D \to D(n)$ is a quasi-isomorphism.
\end{lemma}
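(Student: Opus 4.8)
The plan is to choose coordinates so that $D(n)$ becomes a free divided power algebra over $D$, rewrite the de Rham complex as the total complex of a double complex, and reduce to the divided-power Poincaré lemma in the new variables. Write $P = \mathbf{Z}_p[x_i : i \in I]$, so that $P(n) = P^{\otimes (n+1)}$ is the polynomial algebra on variables $x_i^{(0)}, \dots, x_i^{(n)}$. Fixing (without loss of generality, by symmetry) the projection $\mathrm{pr}_0 : P \to P(n)$ onto the $0$th factor as the chosen map $D \to D(n)$, I set $\xi_i^{(j)} = x_i^{(j)} - x_i^{(0)}$ for $1 \le j \le n$, so that $P(n) = P[\xi_i^{(j)}]$ and the $\xi_i^{(j)}$ all lie in $J(n)$. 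Since the divided power envelope is insensitive to adjoining polynomial generators lying in the relevant ideal (\cite{Berthelot}), this identifies $D(n) = D\langle \xi_i^{(j)} \rangle^\wedge$, the $p$-adically completed free divided power algebra over $D$ on the $\xi_i^{(j)}$. Correspondingly $\Omega^1_{D(n)} = (\Omega^1_D \otimes^\wedge_D D(n)) \oplus \Omega^1_{D(n)/D}$, where $\Omega^1_{D(n)/D}$ is the completed free $D(n)$-module on the $d\xi_i^{(j)}$; this induces a bigrading on $\Omega^*_{D(n)}$, the first degree counting base forms and the second counting $\xi$-forms.

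Next I would pin down the connection. Because $\mathcal{F}$ is a crystal, the comparison isomorphism identifies $M(n)$ with $M \otimes^\wedge_D D(n)$ via $\mathrm{pr}_0$, and, since the comparison maps of a crystal are horizontal, the connection on $M(n)$ is the pullback connection $\mathrm{pr}_0^*\nabla$ (this is where the Taylor formula of Lemma \ref{lemma-category-crystals-quasi-coherent} enters). Decomposing $\nabla^{(n)} = \nabla^{(1,0)} + \nabla^{(0,1)}$ according to the bigrading, integrability ($\nabla^2 = 0$) separates into bidegrees $(2,0)$, $(1,1)$, $(0,2)$ and hence forces each summand to be a differential and the two to anticommute; thus $M(n) \otimes^\wedge_{D(n)} \Omega^*_{D(n)}$ is the total complex of a double complex. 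The explicit pullback formula shows $\nabla^{(0,1)} = 1_M \otimes d_\xi$, the divided-power de Rham differential in the $\xi$-variables alone, with no twisting by the base connection. Under this description the map in the lemma is the inclusion of the source into the $\xi$-degree-$0$ column.

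The crux is to show that, for each fixed base degree $a$, the $\xi$-direction complex $(M \otimes^\wedge_D \Omega^a_D) \otimes^\wedge_D (\Omega^*_{D(n)/D}, d_\xi)$ is a resolution of $M \otimes^\wedge_D \Omega^a_D$ in degree $0$. This is exactly the divided-power Poincaré lemma for $D\langle \xi \rangle^\wedge / D$: in one variable the augmented complex $D \to D\langle \xi \rangle \xrightarrow{d} D\langle \xi \rangle\, d\xi$ is split exact via the $D$-linear homotopy $\gamma_k(\xi)\, d\xi \mapsto \gamma_{k+1}(\xi)$, the case of finitely many variables follows by tensoring these contractions together, and the general case follows by passing to the filtered colimit over finite subsets of the $\xi_i^{(j)}$. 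Since the contracting homotopy is $D$-linear and $p$-adically continuous, it survives the completed tensor product with $M \otimes^\wedge_D \Omega^a_D$ (no flatness is needed, as a split exact complex stays split exact under any additive functor) as well as the $p$-adic completion. Granting this, the spectral sequence of the double complex obtained by taking $\xi$-cohomology first degenerates with only the $\xi$-degree-$0$ row surviving, and that row is precisely $M \otimes^\wedge_D \Omega^*_D$ with its base connection, i.e. the source of the map; hence the map is a quasi-isomorphism.

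The main obstacle I anticipate is the bookkeeping in this last step: verifying that the explicit contracting homotopy remains a genuine contracting homotopy after $p$-adic completion and after allowing infinitely many variables $\xi_i^{(j)}$, so that the filtered colimits, the completed tensor products, and the totalisation of the double complex all interact correctly (in particular that the identities $dh + hd = 1 - (\text{augmentation})$ persist by continuity after completion). The other point requiring care, though routine once set up, is the clean justification that $\nabla^{(0,1)} = 1 \otimes d_\xi$ via the horizontality of the crystal comparison maps.
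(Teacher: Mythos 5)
Your proof is correct and takes essentially the same route as the paper: both identify $M(n) \simeq M \otimes^\wedge_D D(n)$ compatibly with the connections via the crystal structure, separate the relative forms $\Omega^*_{D(n)/D}$ from the base forms (the paper uses the corresponding filtration where you use an honest bigrading and double complex, but the resulting spectral sequence is the same), and reduce to the statement that $D \to \Omega^*_{D(n)/D}$ is a quasi-isomorphism because $D(n)$ is a ($p$-adically completed) divided power polynomial algebra over $D$. The only difference is that the paper cites Berthelot (Lemma V.2.1.2) for this divided power Poincar\'e lemma, whereas you exhibit the explicit $D$-linear contracting homotopy $\gamma_k(\xi)\,d\xi \mapsto \gamma_{k+1}(\xi)$ and check it survives completion and infinitely many variables.
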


\begin{proof}
This is the ``naive'' Poincare lemma. More precisely, each natural map $D \to D(n)$ defines an isomorphism of $D(n)$-modules $M(n) \simeq M \otimes_D^\wedge D(n)$ compatible with $\nabla$ by the crystalline nature of $\mathcal{F}$. Thus, there is a filtration
of $M(n) \otimes^\wedge_{D(n)} \Omega^*_{D(n)}$ whose graded pieces are
$M \otimes^\wedge_D \Omega^i_D \otimes^\wedge_D \Omega^*_{D(n)/D}$. Thus it suffices to
show the natural map 
$$
D \to \Big(D(n) \to \Omega^1_{D(n)/D} \to \Omega^2_{D(n)/D} \to \cdots\Big)
$$
is a quasi-isomorphism. This can be checked explicitly as $D(n)$ is a divided power polynomial
algebra over $D$ (see \cite[Lemma V.2.1.2]{Berthelot}).
\end{proof}

\noindent
Next, we identify the ``rows'' of the de Rham complex of $M(\bullet)$.

\begin{lemma}
\label{lemma-cech-improved}
The complex
$$
M \otimes^\wedge_D \Omega^i_D \to
M(1) \otimes^\wedge_{D(1)} \Omega^i_{D(1)} \to
M(2) \otimes^\wedge_{D(2)} \Omega^i_{D(2)} \cdots
$$
computes $R\Gamma(S/\Sigma, \mathcal{F} \otimes_{\mathcal{O}_{S/\Sigma}} \Omega^i_{S/\Sigma})$.
\end{lemma}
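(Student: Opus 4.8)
The plan is to deduce this from Lemma~\ref{lemma-cech} applied to the quasi-coherent sheaf $\mathcal{G} := \mathcal{F} \otimes_{\mathcal{O}_{S/\Sigma}} \Omega^i_{S/\Sigma}$, by identifying its \v{C}ech--Alexander complex $\mathcal{G}(\bullet)$ with the complex in the statement. First I would record that $\mathcal{G}$ is indeed quasi-coherent: restriction to the Zariski site of any object $(U,T,\delta)$ is the inverse image along a morphism of ringed topoi, hence monoidal, so $\mathcal{G}_T \cong \mathcal{F}_T \otimes_{\mathcal{O}_T} \Omega^i_T$, which is quasi-coherent as a tensor product of quasi-coherent $\mathcal{O}_T$-modules. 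Evaluating on the affine objects $(S,\mathrm{Spec}(D(n)/p^eD(n)),\delta(n))$ of Notation~\ref{notation-affine} then gives, as a module, $\mathcal{G}_{D(n)/p^e} \cong \mathcal{F}_{D(n)/p^e} \otimes_{D(n)/p^e} \Omega^i_{D(n)/p^e}$, where I abbreviate the value on that object by a subscript.

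Next I would identify the terms $\mathcal{G}(n) = \lim_{e \geq N} \mathcal{G}_{D(n)/p^e}$ with $M(n) \otimes^\wedge_{D(n)} \Omega^i_{D(n)}$. Since $\mathcal{F}$ is a crystal, the comparison maps for the thickenings $\mathrm{Spec}(D(n)/p^eD(n)) \hookrightarrow \mathrm{Spec}(D(n)/p^{e+1}D(n))$ give compatible isomorphisms $\mathcal{F}_{D(n)/p^{e+1}}/p^e \cong \mathcal{F}_{D(n)/p^e}$, so Lemma~\ref{lemma-p-adic-limit} applied to the system $e \mapsto \mathcal{F}_{D(n)/p^e}$ yields $\mathcal{F}_{D(n)/p^e} \cong M(n)/p^e$. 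Combining this with the elementary identity $(A \otimes_R B)/p^e \cong (A/p^e) \otimes_{R/p^e} (B/p^e)$ and the compatibility $\Omega^i_{D(n)}/p^e \cong \Omega^i_{D(n)/p^e}$ from Notation~\ref{notation-affine}, I obtain
$$
\mathcal{G}_{D(n)/p^e} \cong \big(M(n) \otimes_{D(n)} \Omega^i_{D(n)}\big)/p^e.
$$
Passing to the limit over $e \geq N$ gives $\mathcal{G}(n) \cong M(n) \otimes^\wedge_{D(n)} \Omega^i_{D(n)}$, and these identifications are visibly compatible with the cosimplicial structure maps. Thus $\mathcal{G}(\bullet)$ is precisely the complex in the statement, i.e.\ the $i$-th row of the de Rham bicomplex of $M(\bullet)$.

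Finally I would verify the $R^1\lim_{e}$-vanishing hypothesis of Lemma~\ref{lemma-cech} for $\mathcal{G}$. The displayed description shows the transition maps of the system $e \mapsto \mathcal{G}_{D(n)/p^e}$ are the reductions $\big(M(n) \otimes_{D(n)} \Omega^i_{D(n)}\big)/p^{e} \to \big(M(n) \otimes_{D(n)} \Omega^i_{D(n)}\big)/p^{e-1}$, which are surjective; hence Mittag--Leffler holds and $R^1\lim_e$ vanishes, exactly as recorded in the remark following Lemma~\ref{lemma-cech}. With both hypotheses checked, Lemma~\ref{lemma-cech} shows that $\mathcal{G}(\bullet)$ computes $R\Gamma(S/\Sigma,\mathcal{G}) = R\Gamma(S/\Sigma, \mathcal{F} \otimes_{\mathcal{O}_{S/\Sigma}} \Omega^i_{S/\Sigma})$, as desired. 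The only genuinely delicate points are bookkeeping ones: that restriction to each $T$ is monoidal, so the sheaf-theoretic tensor product reduces to the naive module tensor product on affine objects, and that the $p$-adic completions and limits commute as claimed; everything else is a direct application of the already-established Lemma~\ref{lemma-cech}.
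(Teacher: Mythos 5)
Your proposal is correct and follows essentially the same route as the paper: identify each term $M(n)\otimes^\wedge_{D(n)}\Omega^i_{D(n)}$ with $\lim_{e\geq N}\big(\mathcal{F}\otimes_{\mathcal{O}_{S/\Sigma}}\Omega^i_{S/\Sigma}\big)\big((S,\mathrm{Spec}(D(n)/p^e),\delta(n))\big)$ via Lemma \ref{lemma-p-adic-limit}, then invoke Lemma \ref{lemma-cech} using quasi-coherence and the surjectivity of the transition maps for the $R^1\lim_e$ vanishing. You spell out the bookkeeping (monoidality of restriction, compatibility of completions) more explicitly than the paper does, but the argument is the same.
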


\begin{proof}
By Lemma \ref{lemma-p-adic-limit}, we have
$$
M \otimes^\wedge_{D} \Omega^i_{D}  \simeq \lim_{e \geq N} \Big(M/p^e \otimes_{D/p^eD} \Omega^i_{D/p^eD}\Big) \simeq \lim_{e \geq N} \Big( \big(\mathcal{F} \otimes_{\mathcal{O}_{S/\Sigma}} \Omega^i_{S/\Sigma} \big)(  (S,\mathrm{Spec}(D/p^e),\delta)) \Big),
$$
and similarly for the terms over $D(n)$. The claim now follows from
Lemma \ref{lemma-cech}, the fact that $\Omega^i_{S/\Sigma}$ is quasi-coherent, and the fact that the transition maps
$M/p^{e + 1}M \otimes_{D/p^{e+ 1}D} \Omega^i_{D/p^{e + 1}D} \to M/p^eM \otimes_{D/p^eD} \Omega^i_{D/p^eD}$ are surjective.
\end{proof}

\noindent
To finish the proof of Theorem \ref{theorem-compute-by-de-Rham}, we need an acyclicity result about the ``rows'' of the de Rham complex of $M(\bullet)$. First, we handle the case $M = D$, i.e., when $\mathcal{F} = \mathcal{O}_{S/\Sigma}$.

\begin{lemma}
\label{lemma-vanishing-omega-1}
The complex
$$
\Omega^1_D \to \Omega^1_{D(1)} \to \Omega^1_{D(2)} \to \cdots
$$
is homotopic to zero as a $D(\bullet)$-cosimplicial module.
\end{lemma}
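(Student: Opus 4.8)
The plan is to exhibit an explicit cosimplicial contracting homotopy for the complex $\Omega^1_{D(\bullet)}$, exploiting the fact that $D(\bullet)$ is the \v{C}ech-Alexander (cosimplicial) object associated to the single augmentation $D \to D$, together with the concrete description of $\Omega^1$ as $\mathcal{I}/\mathcal{I}^{[2]}$ via the diagonal. First I would record the structure: $D(n)$ is the $p$-completed divided power envelope of the $(n+1)$-fold tensor power of $P$, so $D(\bullet)$ is a cosimplicial commutative ring, and the modules $\Omega^1_{D(n)}$ assemble into a cosimplicial $D(\bullet)$-module via the functoriality of differentials. The key point is that this cosimplicial ring is \emph{split}: the augmentation $D \to D(\bullet)$ admits an extra degeneracy. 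Indeed, because $D(n)$ is built from $P^{\otimes(n+1)}$, there is always a ``forget the last factor'' augmentation $s^{-1}: D(n+1) \to D(n)$ (coming from the projection $P^{\otimes(n+2)} \to P^{\otimes(n+1)}$ onto the first $n+1$ factors, after divided power envelope and completion), and these maps furnish the extra codegeneracy making the augmented cosimplicial ring $D \to D(\bullet)$ homotopy-equivalent, as a cosimplicial object, to the constant object $D$.

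Next I would transport this splitting to the cosimplicial module $\Omega^1_{D(\bullet)}$. The extra degeneracies $s^{-1}$ are ring maps, hence induce compatible maps on K\"ahler differentials $\Omega^1_{D(n+1)} \to \Omega^1_{D(n)}$ (here one uses that differentials compatible with divided powers are functorial for divided power maps, as set up in Section \ref{section-modules}). The standard simplicial-homotopy machinery then says: an augmented cosimplicial object with an extra codegeneracy is cosimplicially homotopy-equivalent to its augmentation term. Applying this to the module $\Omega^1_{D(\bullet)}$ would show it is cosimplicially homotopic to the \emph{constant} cosimplicial module on $\Omega^1_D$. That is not yet the desired statement, which asserts homotopy equivalence to \emph{zero}, so the remaining task is to kill the augmentation term $\Omega^1_D$.

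To remove $\Omega^1_D$ I would observe that the augmentation map $\Omega^1_D \to \Omega^1_{D(1)}$ is itself split-injective with a functorial retraction, and more precisely that the connection formula (\ref{equation-connection}) identifies $\Omega^1_{S/\Sigma}$ on the object $(S, \mathrm{Spec}(D(1)/p^e(1)), \delta(1))$ with $\mathcal{I}/\mathcal{I}^{[2]}$ for the diagonal ideal of $D(1)$. The two coface maps $d^0, d^1: \Omega^1_D \to \Omega^1_{D(1)}$ correspond to pullback along the two projections $\mathrm{pr}_1, \mathrm{pr}_2$, and their \emph{difference} is precisely the universal derivation $d: D(1) \to \Omega^1_{D(1)}$ restricted appropriately. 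The cleanest route is to construct the contracting homotopy directly in coordinates: writing $P = \mathbf{Z}_p[\{x_i\}]$, the ring $D(n)$ is a divided power polynomial algebra over $D$ on the variables $\xi_i^{(j)} = 1 \otimes \cdots \otimes x_i \otimes \cdots \otimes 1 - x_i \otimes 1 \otimes \cdots$ (the ``differences'' across tensor factors), and $\Omega^1_{D(n)}$ is freely generated over $D(n)$ by the $dx_i$ together with the $d\xi_i^{(j)}$. One then writes down the explicit cosimplicial operators $h^n: \Omega^1_{D(n+1)} \to \Omega^1_{D(n)}$ by ``integrating out'' the top difference variable $\xi_i^{(n)}$ and verifies the cosimplicial identities $\sum (-1)^k h^n d^k + \sum(-1)^k d^{k}h^{n-1} = \mathrm{id}$ by direct computation, exactly as in the proof of the naive Poincar\'e Lemma \ref{lemma-poincare} but now in the \v{C}ech (horizontal) direction rather than the de Rham (vertical) one.

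I expect the main obstacle to be the bookkeeping needed to make the explicit homotopy genuinely cosimplicial rather than merely a contraction of the associated cochain complex. Producing a null-homotopy of the normalised cochain complex $\Omega^1_D \to \Omega^1_{D(1)} \to \cdots$ is comparatively easy from the extra-codegeneracy argument, but the statement demands a homotopy \emph{as a $D(\bullet)$-cosimplicial module}, which requires the homotopy operators to be $D(\bullet)$-linear and to satisfy the full cosimplicial identities on the nose. The delicate point is that the differential variables $dx_i$ coming from the base $D$ do not get killed by integrating out difference variables, so one must check that these ``base'' contributions cancel against the contribution of the augmentation; concretely, the homotopy must simultaneously contract the combinatorial (\v{C}ech) simplex and trivialise the $\Omega^1_D$ factor. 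Verifying that the coordinate formulas respect the divided power structure (so that $\delta_n$ of the difference variables behaves correctly under the homotopy operators, via $d\delta_n(\xi) = \delta_{n-1}(\xi)d\xi$) is the step most likely to require care, though it is ultimately a routine check given the compatibility conventions fixed at the outset.
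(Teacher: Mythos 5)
Your proposal correctly identifies the two real issues (the statement demands a null-homotopy, not merely contractibility onto an augmentation, and the homotopy must be genuinely cosimplicial), but it does not supply the idea that actually resolves them, and the mechanism you propose for the key step is the wrong one. The extra-codegeneracy argument, as you concede, only exhibits $\Omega^1_{D(\bullet)}$ as homotopy equivalent to the constant object $\Omega^1_D$, i.e.\ it kills the positive-degree cohomology of the normalised complex; it cannot produce a homotopy between the identity and zero. Your proposed repair --- homotopy operators $h^n:\Omega^1_{D(n+1)}\to\Omega^1_{D(n)}$ obtained by ``integrating out the top difference variable $\xi_i^{(n)}$, exactly as in the proof of Lemma \ref{lemma-poincare} but in the \v{C}ech direction'' --- conflates the two directions of the bicomplex. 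Divided-power integration ($\xi^{[k]}\,d\xi\mapsto\xi^{[k+1]}$) is an operator internal to a single $D(n)$; it contracts the relative de Rham complex $\Omega^*_{D(n)/D}$ (the columns, which is Lemma \ref{lemma-poincare}) and does not produce maps that lower the \v{C}ech index $n$. You then flag the genuinely problematic point --- that the generators $dx_i$ coming from the base $D$ are not touched by any such integration --- but leave it unresolved, saying only that these contributions ``must cancel against the augmentation.''

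The missing idea is combinatorial, not differential, and requires no divided powers at all: the cosimplicial module $\Omega^1_{D(\bullet)}$ is the base change along $P^{\otimes(\bullet+1)}\to D(\bullet)$ of $\Omega^1_{P^{\otimes(\bullet+1)}}$, which is free on the slot-indexed generators $dx_i(e)$, $e\in\{0,\dots,n\}$, with a map $f:[n]\to[m]$ acting by pure relabelling $dx_i(e)\mapsto dx_i(f(e))$. Thus the whole cosimplicial module is a direct sum over $i\in I$ of copies of the standard cosimplicial module $[n]\mapsto\bigoplus_{e=0}^n A_n e_e$, and for that module one can write down an explicit cosimplicial homotopy $h:M_*\to\mathrm{Hom}(\Delta[1],M_*)$ between the identity and zero (Example \ref{example-cosimplicial-module}). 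In particular your ``base'' generators $dx_i$ are simply $dx_i(0)$ and are contracted along with all the other slots --- there is no separate cancellation to arrange --- and since the argument takes place over the polynomial rings $P^{\otimes(n+1)}$ before passing to divided power envelopes, the DP-compatibility check you anticipate never arises. Without this decomposition (or an equivalent device), your outline does not close the gap between acyclicity in positive degrees and the asserted null-homotopy.
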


\begin{proof}
This complex is equal to the base change of the cosimplicial module
$$
M_* = \left(
\Omega^1_P \to
\Omega^1_{P \otimes P} \to
\Omega^1_{P \otimes P \otimes P} \to \cdots
\right)
$$
via the cosimplicial ring map $P^{\otimes n + 1} \to D(n)$.
Hence it suffices to show that the cosimplicial module $M_*$
is homotopic to zero. Let $P = \mathbf{Z}_p[\{x_i\}_{i \in I}]$.
Then $P^{\otimes n + 1}$ is the polynomial algebra on the elements
$$
x_i(e) = 1 \otimes \cdots \otimes x_i \otimes \cdots \otimes 1
$$
with $x_i$ in the $e$th slot. The modules of the complex are free on the
generators $\text{d}x_i(e)$. Note that if $f : [n] \to [m]$ is a
map then we see that
$$
M_*(f)(\text{d}x_i(e)) = \text{d}x_i(f(e))
$$
Hence we see that $M_*$ is a direct sum of copies of
Example \ref{example-cosimplicial-module} indexed by $I$, and we win.
\end{proof}

\begin{example}
\label{example-cosimplicial-module}
Suppose that $A_*$ is any cosimplicial ring.
Consider the cosimplicial module $M_*$ defined by the rule
$$
M_n = \bigoplus\nolimits_{i = 0, ..., n} A_n e_i
$$
For a map $f : [n] \to [m]$ define $M_*(f) : M_n \to M_m$
to be the unique $A_*(f)$-linear map which maps $e_i$ to $e_{f(i)}$.
We claim the identity on $M_*$ is homotopic to $0$.
Namely, a homotopy is given by a map of cosimplicial modules
$$
h : M_* \longrightarrow \text{Hom}(\Delta[1], M_*)
$$
where $\Delta[1]$ denote the simplicial set whose set of $n$-simplices
is $\text{Mor}([n], [1])$, see \cite{Meyer}.
Let $\alpha^n_j : [n] \to [1]$ be defined by
$\alpha^n_j(i) = 0 \Leftrightarrow i < j$. Then we define $h$
in degree $n$ by the rule
$$
h_n(e_i)(\alpha^n_j) =
\left\{
\begin{matrix}
e_{i} & \text{if} & i < j \\
0 & \text{else} 
\end{matrix}
\right.
$$
We first check $h$ is a morphism of cosimplicial modules. Namely, for
$f : [n] \to [m]$ we will show that
\begin{equation}
\label{equation-cosimplicial-morphism}
h_m \circ M_*(f) = \text{Hom}(\Delta[1], M_*)(f) \circ h_n
\end{equation}
This is equivalent to saying that the left hand side of 
(\ref{equation-cosimplicial-morphism}) evaluted at $e_i$ is given by
$$
h(e_{f(i)})(\alpha^m_j) =
\left\{
\begin{matrix}
e_{f(i)} & \text{if} & f(i) < j \\
0 & \text{else}
\end{matrix}
\right.
$$
Note that $\alpha^n_j \circ f = \alpha^m_{j'}$ where
$0 \leq j' \leq n$ is such that $f(a) < j$ if and only if $a < j'$.
Thus the right hand side of
(\ref{equation-cosimplicial-morphism}) evaluted at $e_i$ is given by
$$
M_*(f)(h(e_i)(\alpha^m_j \circ f) =
M_*(f)(h(e_i)(\alpha^n_{j'})) =
\left\{
\begin{matrix}
e_{f(i)} & \text{if} & i < j' \\
0 & \text{else} 
\end{matrix}
\right.
$$
It follows from our description of $j'$ that the two answers are equal.
Hence $h$ is a map of cosimplicial modules.
Let $0 : \Delta[0] \to \Delta[1]$ and
$1 : \Delta[0] \to \Delta[1]$ be the obvious maps, and denote
$ev_0, ev_1 : \text{Hom}(\Delta[1], M_*) \to M_*$ the corresponding
evaluation maps. The reader verifies readily that the
the compositions
$$
ev_0 \circ h, ev_1 \circ h : M_* \longrightarrow M_*
$$
are $0$ and $1$ respectively, whence $h$ is the desired homotopy between
$0$ and $1$.
\end{example}

\noindent
We now extend Lemma \ref{lemma-vanishing-omega-1} to allow non-trivial coefficients.

\begin{lemma}
\label{lemma-affine-done}
For all $i > 0$ cosimplicial module
$$
M \otimes^\wedge_D \Omega^i_D \to
M(1) \otimes^\wedge_{D(1)} \Omega^i_{D(1)} \to
M(2) \otimes^\wedge_{D(2)} \Omega^i_{D(2)} \cdots
$$
is homotopy equivalent to zero.
\end{lemma}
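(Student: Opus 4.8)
The plan is to reduce to the coefficient-free factor and transport a contraction along the tensor product. The key structural observation is that the contracting homotopy I will build on $\Omega^i_{D(\bullet)}$ is $D(\bullet)$-\emph{linear}, so it can be tensored against the cosimplicial module $M(\bullet)$ without any knowledge of the connection-twisted structure maps of $M(\bullet)$ coming from \eqref{equation-connection}. Precisely: suppose $h \colon \Omega^i_{D(\bullet)} \to \mathrm{Hom}(\Delta[1], \Omega^i_{D(\bullet)})$ is a morphism of cosimplicial $D(\bullet)$-modules with $ev_0 \circ h = 0$ and $ev_1 \circ h = \mathrm{id}$, in the sense of Example \ref{example-cosimplicial-module}. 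In each cosimplicial degree $\mathrm{Hom}(\Delta[1], -)$ is a \emph{finite} product (over $\mathrm{Mor}([n],[1])$), so it commutes with $- \otimes^\wedge_{D(\bullet)} M(\bullet)$; as $h$ is $D(\bullet)$-linear, the map $h \otimes \mathrm{id}_{M(\bullet)}$ is then a cosimplicial homotopy between $0$ and the identity on $M(\bullet) \otimes^\wedge_{D(\bullet)} \Omega^i_{D(\bullet)}$. Here one uses only that $M(\bullet)$ is a cosimplicial $D(\bullet)$-module (Lemma \ref{lemma-category-crystals-quasi-coherent}); its structure maps play no role. Thus it suffices to contract $\Omega^i_{D(\bullet)}$ through $D(\bullet)$-linear maps, generalising Lemma \ref{lemma-vanishing-omega-1} (the case $i = 1$) to all $i > 0$.

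To build this contraction I would repeat the proof of Lemma \ref{lemma-vanishing-omega-1} verbatim for $\Omega^i$ in place of $\Omega^1$. Writing $P = \mathbf{Z}_p[\{x_a\}_{a \in I}]$, the module $\Omega^i_{D(n)}$ is free over $D(n)$ on the wedges $\omega = \mathrm{d}x_{a_1}(c_1) \wedge \cdots \wedge \mathrm{d}x_{a_i}(c_i)$ with the pairs $(a_\ell, c_\ell)$ distinct and $c_\ell \in \{0, \dots, n\}$, and a map $f \colon [n] \to [m]$ sends $\omega$ to $\mathrm{d}x_{a_1}(f(c_1)) \wedge \cdots \wedge \mathrm{d}x_{a_i}(f(c_i))$. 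In the notation of Example \ref{example-cosimplicial-module} I define, $D(n)$-linearly,
\[
h_n(\omega)(\alpha^n_j) =
\begin{cases}
\omega & \text{if } c_\ell < j \text{ for all } \ell, \\
0 & \text{otherwise.}
\end{cases}
\]
This is well defined on the exterior power, since the condition $\max_\ell c_\ell < j$ is symmetric in the factors and the right-hand side vanishes whenever two factors agree; and for $i = 1$ it is exactly the homotopy of Example \ref{example-cosimplicial-module}. The same computation as in that example, using $\alpha^n_j \circ f = \alpha^m_{j'}$, shows that $h$ is a morphism of cosimplicial modules with $ev_0 \circ h = 0$ and $ev_1 \circ h = \mathrm{id}$. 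Finally, $h$ is manifestly compatible with reduction modulo $p^e$, hence passes to the $p$-adic completions (Lemma \ref{lemma-p-adic-limit}). Combined with the first paragraph, this proves the lemma.

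The step I expect to require the most care — and the reason the statement is not formally immediate from Lemma \ref{lemma-vanishing-omega-1} — is the \emph{integrality} of the contraction of the exterior power. A homotopy manufactured from a contraction of $\Omega^1_{D(\bullet)}$ via a graded Leibniz rule would only contract $\Omega^i$ after inverting $i!$, which is unavailable over $\mathbf{Z}_p$ once $p \le i$; the ``threshold'' homotopy above sidesteps this and works over $\mathbf{Z}_p$ with no denominators. The residual subtlety is bookkeeping: one must check the cosimplicial identity for $h$ also when $f$ is non-injective, where $f_\ast \omega$ may vanish, so that both sides of the identity collapse to $0$ consistently. This is the exact analogue of the verification carried out in Example \ref{example-cosimplicial-module}, now tracked through the wedge, and presents no new difficulty beyond care with signs and cancellations.
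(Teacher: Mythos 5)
Your proof is correct and takes essentially the same route as the paper: the paper likewise factors the complex as the completed tensor product of $M(\bullet)$ with $\Omega^i_{D(\bullet)}$ and contracts the second factor, invoking the fact that termwise $\wedge^i$, tensoring with a cosimplicial $D(\bullet)$-module, and termwise $p$-adic completion all preserve cosimplicial null-homotopies. Your explicit ``threshold'' homotopy is precisely the termwise $i$-th exterior power of the homotopy of Example \ref{example-cosimplicial-module}, so you have merely made the paper's $\wedge^i$ step explicit (and your observation that this avoids any $i!$ denominators is exactly why one contracts at the cosimplicial level rather than via a chain-level Leibniz rule).
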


\begin{proof}
The cosimplicial $D(\bullet)$-module above is  a (termwise) completed tensor 
product of the cosimplicial $D(\bullet)$-modules $M(\bullet)$ and $\Omega^i_{D(\bullet)}$.  Lemma \ref{lemma-vanishing-omega-1} shows that $\Omega^1_{D(\bullet)}$
is homotopy equivalent to zero as a cosimplicial $D(\bullet)$-module. The claim now follows as the following three operations preserve the property of being homotopy equivalent to $0$ for cosimplicial $D(\bullet)$-modules: termwise application of $\wedge^i$, tensoring with another cosimplicial $D(\bullet)$-module, and termwise $p$-adic completion.
\end{proof}

\noindent
The material above gives a rather pleasing proof that
crystalline cohomology is computed by the de Rham complex:

\begin{proof}[Proof of Theorem \ref{theorem-compute-by-de-Rham}]
We look at the first quadrant double complex $M^{\bullet, \bullet}$ with terms
$$
M^{n, m} = M(n) \otimes^\wedge_{D(n)} \Omega^m_{D(n)}.
$$
The horizontal differentials are given determined by the {\v C}ech-Alexander complex, while 
the vertical ones are given by the de Rham complex. By Lemma \ref{lemma-poincare}, 
each column complex $M^{n, \bullet}$ is quasi-isomorphic to de Rham complex 
$M \otimes^\wedge_D \Omega^\bullet_D$. Hence $H^m(M^{n, \bullet})$ is independent of 
$n$ and the differentials are
$$
H^m(M^{0, \bullet}) \xrightarrow{0}
H^m(M^{1, \bullet}) \xrightarrow{1}
H^m(M^{2, \bullet}) \xrightarrow{0}
H^m(M^{3, \bullet}) \xrightarrow{1} \cdots 
$$
We conclude that $\text{Tot}(M^{\bullet, \bullet})$ computes
the cohomology of the de Rham complex $M \otimes^\wedge_D \Omega^*_D$
by the first spectral sequence associated to the double complex.
On the other hand, Lemma \ref{lemma-cech} shows that
the ``row'' complex $M^{\bullet, 0}$ computes the cohomology of $\mathcal{F}$.
Hence if we can show that each $M^{\bullet, m}$ for $m > 0$ is acyclic, then 
we're done by the second spectral sequence. The desired vanishing now follows from 
Lemma \ref{lemma-affine-done}.
\end{proof}

\begin{remark}
\label{remark-replace-by-smooth-main-affine-theorem}
Lemma \ref{lemma-category-crystals-quasi-coherent} and Theorem \ref{theorem-compute-by-de-Rham} remain valid when $D$ is taken to be the $p$-adic completion of the divided power envelope of a surjection $P \to A$ with $P$ any {\em smooth} $\mathbf{Z}_p$-algebra. For Lemma \ref{lemma-category-crystals-quasi-coherent}, this was discussed in Remark \ref{remark-replace-by-smooth-category-crystals}. Thus, the only non-obvious point now is whether an analogue of Lemma \ref{lemma-vanishing-omega-1} is valid. However, at least Zariski locally on $\mathrm{Spec}(P)$, there is an \'etale map $F \to P$ with $F$ a polynomial $\mathbf{Z}_p$-algebra. Thus, the cotangent bundle of $P$ (and hence that of $D$) is obtained by base change from that of $F$,  so the required claim follows from the proof of Lemma \ref{lemma-vanishing-omega-1}. This shows that the assertion of Theorem \ref{theorem-compute-by-de-Rham} is true Zariski locally on $S$, and hence globally by the \v{C}ech spectral sequence for a suitable affine cover.
\end{remark}

\begin{remark}
\label{remark-affine-mod-p^e}
Let $\Sigma_e = \mathrm{Spec}(\mathbf{Z}/p^e)$, and let $S$ be an affine $\Sigma_e$-scheme. One can define the crystalline site $(S/\Sigma_e)_{cris}$ and crystals in the obvious way. The arguments given in this section work {\em mutatis mutandis} to show that the cohomology $R\Gamma(S/\Sigma_e,\mathcal{F})$ of a crystal $\mathcal{F}$ of quasi-coherent $\mathcal{O}_{S/\Sigma_e}$-modules is computed by the de Rham complex
$$
M_e \to M_e \otimes_{D/p^e} \Omega^1_{D/p^e} \to M_e \otimes_{D/p^e} \Omega^2_{D/p^e} \cdots,
$$
where $D/p^e$ is as in Notation \ref{notation-affine}, and $M_e = \mathcal{F}( (S,\mathrm{Spec}(D/p^e),\delta) )$ is the $D/p^e$-module that is the value of the crystal $\mathcal{F}$ on $\mathrm{Spec}(D/p^e)$, equipped with the integrable connection as in \eqref{equation-connection}.
\end{remark}

\section{Global analogues}
\label{section-global}

\noindent
Our goal in this section is to prove a global analogue (Theorem \ref{theorem-global-cohomology}) of the results of \S \ref{section-affine}, and deduce some geometric consequences (Corollaries \ref{corollary-formal-functions} and \ref{corollary-base-change}). In order to do so, we first conceptualize the work done in \S \ref{section-affine} as a {\em vanishing} result on arbitrary schemes in Theorem \ref{theorem-global}; this formulation gives us direct access to certain globally defined maps, which are then used to effortlessly reduce global statements to local ones.

\subsection{A vanishing statement}
Our vanishing result is formulated terms of the ``change of topology'' map relating the crystalline site to the Zariski site, whose construction we recall first. Let $f:S \to \Sigma$ be a map with $p$ locally nilpotent on $S$. There is a morphism of ringed topoi
$$
u_{S/\Sigma}: \Big(\mathrm{Shv}((S/\Sigma)_{cris}),\mathcal{O}_{S/\Sigma}\Big) \to \Big( \mathrm{Shv}(S_{zar}),f^{-1} \mathcal{O}_\Sigma \Big)
$$
characterised by the formula
$$
u_{S/\Sigma}^{-1}(F)( (U,T,\delta)) = F(U)
$$ 
for any sheaf $F \in \mathrm{Shv}(S_{zar})$ and object $(U,T,\delta) \in (S/\Sigma)_{cris}$ (see \cite[\S III.3.2]{Berthelot}).  The associated pushforward $Ru_{S/\Sigma}:D(\mathcal{O}_{S/\Sigma}) \to D(f^{-1} \mathcal{O}_\Sigma)$ is a localised version of crystalline cohomology, i.e., for any $U \in S_{zar}$, we have
$$
R\Gamma(U,Ru_{S/\Sigma}(F)) \simeq R\Gamma( U/\Sigma, F);
$$
see \cite[Corollary III.3.2.4]{Berthelot} for the corresponding statement at the level of cohomology groups. With this language, our main result is the following somewhat surprising theorem.

\begin{theorem}
\label{theorem-global}
Let $S$ be a scheme over $\Sigma$ such that $p$ is locally nilpotent on $S$.
Let $\mathcal{F}$ be a crystal in quasi-coherent
$\mathcal{O}_{S/\Sigma}$-modules. The truncation map of complexes
$$
(\mathcal{F} \to
\mathcal{F} \otimes_{\mathcal{O}_{S/\Sigma}} \Omega^1_{S/\Sigma} \to
\mathcal{F} \otimes_{\mathcal{O}_{S/\Sigma}} \Omega^2_{S/\Sigma} \to \cdots)
\longrightarrow \mathcal{F}[0],
$$
while not a quasi-isomorphism, becomes a quasi-isomorphism after applying $Ru_{S/\Sigma}$. In fact, for any $i > 0$, we have 
$$Ru_{S/\Sigma}(\mathcal{F} \otimes_{\mathcal{O}_{S/\Sigma}} \Omega^i_{S/\Sigma}) = 0.$$ 
\end{theorem}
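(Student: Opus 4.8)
The plan is to prove the stronger second assertion first, namely that $Ru_{S/\Sigma}(\mathcal{F} \otimes_{\mathcal{O}_{S/\Sigma}} \Omega^i_{S/\Sigma}) = 0$ for every $i > 0$, and then to deduce the statement about the truncation map formally. The key observation is that this vanishing is a \emph{local} assertion on $S_{zar}$, so it can be tested on an affine basis, where the machinery of \S\ref{section-affine} has already done all the work.

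For the reduction to the affine case, I would exploit that $Ru_{S/\Sigma}$ is a localised form of crystalline cohomology. Concretely, the cohomology sheaf $R^j u_{S/\Sigma}(\mathcal{F} \otimes \Omega^i_{S/\Sigma})$ is the Zariski sheaf associated to the presheaf $U \mapsto H^j(U/\Sigma, \mathcal{F} \otimes \Omega^i_{S/\Sigma})$, via the formula $R\Gamma(U, Ru_{S/\Sigma}(-)) \simeq R\Gamma(U/\Sigma, -)$ recalled just before the theorem. Since the affine opens $U = \mathrm{Spec}(A)$ form a basis for $S_{zar}$ and $p$ is nilpotent on each such $A$, it suffices to show $R\Gamma(U/\Sigma, \mathcal{F}|_U \otimes \Omega^i_{U/\Sigma}) = 0$ for every affine $U$ and every $i > 0$; here I use that restriction to $(U/\Sigma)_{cris}$ carries $\mathcal{F}$ to a crystal and commutes with $\otimes$ and with the formation of $\Omega^i$. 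This last vanishing is exactly what the affine theory supplies: Lemma \ref{lemma-cech-improved} identifies $R\Gamma(U/\Sigma, \mathcal{F}|_U \otimes \Omega^i_{U/\Sigma})$ with the \v{C}ech-Alexander complex $M \otimes^\wedge_D \Omega^i_D \to M(1) \otimes^\wedge_{D(1)} \Omega^i_{D(1)} \to \cdots$, and Lemma \ref{lemma-affine-done} shows this cosimplicial module is homotopy equivalent to zero for $i > 0$, hence acyclic. Thus all cohomology sheaves vanish on a basis, giving $Ru_{S/\Sigma}(\mathcal{F} \otimes \Omega^i_{S/\Sigma}) = 0$.

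To pass from this to the statement about the truncation map, I would invoke the stupid-truncation short exact sequence of complexes on $(S/\Sigma)_{cris}$,
$$
0 \to \sigma_{\geq 1}\big(\mathcal{F} \otimes \Omega^\bullet_{S/\Sigma}\big) \to \big(\mathcal{F} \otimes \Omega^\bullet_{S/\Sigma}\big) \to \mathcal{F}[0] \to 0,
$$
where $\sigma_{\geq 1}$ denotes the subcomplex with terms in degrees $\geq 1$. This gives a distinguished triangle, and applying the triangulated functor $Ru_{S/\Sigma}$ reduces the claim to showing $Ru_{S/\Sigma}(\sigma_{\geq 1}(\mathcal{F} \otimes \Omega^\bullet_{S/\Sigma})) = 0$. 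Since this complex is bounded below with graded pieces $\mathcal{F} \otimes \Omega^i_{S/\Sigma}$, $i \geq 1$, the first hypercohomology spectral sequence $E_1^{i,j} = R^j u_{S/\Sigma}(\mathcal{F} \otimes \Omega^i_{S/\Sigma}) \Rightarrow R^{i+j}u_{S/\Sigma}(\sigma_{\geq 1}(\cdots))$ converges, and its $E_1$-page vanishes by the first part; this yields the required vanishing, whence the truncation map becomes a quasi-isomorphism after $Ru_{S/\Sigma}$.

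The routine steps — the two lemma inputs and the triangle argument — are immediate. The one point requiring genuine care is the reduction itself: one must justify that $Ru_{S/\Sigma}$ commutes with restriction to a Zariski open $U$, identifying $Ru_{S/\Sigma}(-)|_U$ with $Ru_{U/\Sigma}$ of the restricted sheaf, so that testing vanishing on affine opens is legitimate. The hard part is therefore not a computation but the correct bookkeeping of the localisation property of the change-of-topology morphism $u_{S/\Sigma}$ (see \cite[\S III.3.2]{Berthelot}); once that is in place, the ``surprising'' global vanishing is a formal consequence of the affine-local homotopy-triviality established in Lemma \ref{lemma-affine-done}.
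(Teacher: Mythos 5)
Your proposal is correct and follows essentially the same route as the paper: the paper's proof likewise reduces the vanishing of $Ru_{S/\Sigma}(\mathcal{F} \otimes_{\mathcal{O}_{S/\Sigma}} \Omega^i_{S/\Sigma})$ to the affine case via the localisation property of $Ru_{S/\Sigma}$ and then cites Lemmas \ref{lemma-cech-improved} and \ref{lemma-affine-done}, with the truncation statement following formally. Your write-up merely makes explicit the bookkeeping (sheafification of the presheaf $U \mapsto H^j(U/\Sigma,-)$ and the stupid-truncation triangle) that the paper leaves implicit.
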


\begin{proof}
This follows from the vanishing of the cohomology of the sheaves
$\mathcal{F} \otimes_{\mathcal{O}_{S/\Sigma}} \Omega^i_{S/\Sigma}$ over affines for $i > 0$, see
Lemmas \ref{lemma-cech-improved} and \ref{lemma-affine-done}. 
\end{proof}

\begin{remark}
The {\em proof} of Theorem \ref{theorem-global} shows that the conclusion $Ru_{S/\Sigma}(\mathcal{F} \otimes_{\mathcal{O}_S} \Omega^i_{S/\Sigma}) = 0$ for $i > 0$ is true for any quasi-coherent $\mathcal{O}_{S/\Sigma}$-module $\mathcal{F}$ which, locally on $S$, satisfies the $R^1 \lim_{e \geq N}$ vanishing condition of Lemma \ref{lemma-cech}. This applies to the following non-crystals: $\Omega^i_{S/\Sigma}$ for all $i$, and any sheaf of the form $u_{S/\Sigma}^{-1} \mathcal{F}$, where $\mathcal{F}$ is a quasi-coherent $\mathcal{O}_S$-module on $S_{zar}$. In particular, it applies to the sheaf $u_{S/\Sigma}^{-1} \mathcal{O}_S$ defined by $u_{S/\Sigma}^{-1} \mathcal{O}_S( (U,T,\delta) ) = \Gamma(U,\mathcal{O}_U)$.
\end{remark}

\subsection{Global results}
We now explain how to deduce global consequences from Theorem \ref{theorem-global}, such as the identification of crystalline cohomology with de Rham cohomology. First, we establish notation used in this section.

\begin{notation}
\label{notation-global}
Let $S$ be a $\Sigma$-scheme such that $p^N = 0$ on $S$. Assume there is a closed immersion $i : S \to X$ of $\Sigma$-schemes with $X$ finitely presented and smooth over $\Sigma$. For each $e \geq N$, set $\mathcal{D}_e$ to be the divided power hull of the map $\mathcal{O}_X/p^e \to \mathcal{O}_S$. Each $\mathcal{D}_e$ is supported on $S$, and letting $e$ vary defines a $p$-adic formal scheme $T$ with underlying space $S$ and structure sheaf $\lim_{e \geq N} \mathcal{D}_e$. Moreover, the category of quasi-coherent $\mathcal{O}_T$-modules can be identified with the category of compatible systems of $\mathcal{D}_e$-modules on $S$ (with compatibilities as in Lemma \ref{lemma-p-adic-limit}), and we only use $T$ as a tool for talking about such compatible systems. The sheaves $\mathcal{D}_e$ define (honest) subschemes $T_e = \mathrm{Spec}(\mathcal{D}_e) \subset T$ containing $S$. The quasi-compactness of $S$ then gives objects $(S,T_e,\delta)$ of $(S/\Sigma)_{cris}$. Following our conventions, let $\Omega^i_{T_e}$ be the pullback of the corresponding sheaf on $X$, and set $\Omega^i_T$ to be the result of glueing the $\Omega^i_{T_e}$.
\end{notation}

\noindent
Let $S$, $T_e$, and $T$ be as in Notation \ref{notation-global}, and let $\mathcal{F}$ be a crystal in quasi-coherent $\mathcal{O}_{S/\Sigma}$-modules. Restricting $\mathcal{F}$ to $(S,T_e,\delta)$ defines quasi-coherent $\mathcal{O}_{T_e}$-modules $\mathcal{F}_{T_e}$ for $e \geq N$, and hence a quasi-coherent $\mathcal{O}_T$-module $\mathcal{M}$ by glueing. The integrable connections $\mathcal{F}_{T_e} \to \mathcal{F}_{T_e} \otimes_{\mathcal{O}_{T_e}} \Omega^1_{T_e}$ coming from \eqref{equation-connection} glue to give an integrable connection
$$
\nabla : \mathcal{M} \longrightarrow
\mathcal{M} \otimes_{\mathcal{O}_T} \Omega^1_T,
$$
which then defines a de Rham complex on $T$.

\begin{theorem}
\label{theorem-global-cohomology}
Let $S \to X$, $T$, $\mathcal{F}$, and $\mathcal{M}$ be as in Notation \ref{notation-global} and the following discussion. The hypercohomology on $T$ of the complex 
$$
\mathcal{M} \to
\mathcal{M} \otimes_{\mathcal{O}_T} \Omega^1_T \to 
\mathcal{M} \otimes_{\mathcal{O}_T} \Omega^2_T \to \cdots 
$$
computes $R\Gamma(S/\Sigma, \mathcal{F})$.
\end{theorem}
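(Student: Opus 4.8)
The plan is to globalise the double-complex argument behind Theorem \ref{theorem-compute-by-de-Rham}, using Theorem \ref{theorem-global} to dispose of the higher de Rham terms globally and the (local, hence global) Poincar\'e Lemma \ref{lemma-poincare} to recognise the de Rham complex on $T$. First I would set up a global analogue of Notation \ref{notation-affine}: for each $n \geq 0$ let $T(n)$ be the $p$-adic formal divided power envelope of $S \hookrightarrow X \times_\Sigma \cdots \times_\Sigma X$ ($n+1$ factors, via the diagonal of $i$), so that $T(0) = T$ and each $T(n)$ is a $p$-adic formal scheme with underlying space $S$. Restricting the crystal $\mathcal{F}$ and the sheaves $\Omega^m_{S/\Sigma}$ to the objects $(S,T(n)_e,\delta)$ and gluing over $e$ (as in Notation \ref{notation-global}, now with the smooth $\Sigma$-scheme $X^{n+1}$) produces quasi-coherent $\mathcal{O}_{T(n)}$-modules $\mathcal{F}_{T(n)}$ and $\Omega^m_{T(n)}$, and hence a first-quadrant double complex of sheaves on $S_{zar}$
$$
\mathcal{K}^{n,m} = \mathcal{F}_{T(n)} \otimes_{\mathcal{O}_{T(n)}} \Omega^m_{T(n)},
$$
whose horizontal differential is the sheafified \v{C}ech--Alexander differential and whose vertical differential is the de Rham differential. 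Note that $\mathcal{K}^{n,m}$ is exactly the restriction of the crystalline sheaf $\mathcal{F}\otimes_{\mathcal{O}_{S/\Sigma}}\Omega^m_{S/\Sigma}$ to the thickening $T(n)$, and that the $n=0$ column is the complex $\mathcal{M}\otimes_{\mathcal{O}_T}\Omega^\bullet_T$ of the statement.

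The heart of the proof is to compute $\mathrm{Tot}(\mathcal{K}^{\bullet,\bullet})$ in $D(S_{zar})$ in two ways, mirroring the two spectral sequences in the proof of Theorem \ref{theorem-compute-by-de-Rham}. For the columns, I would use that the Poincar\'e Lemma \ref{lemma-poincare} is local on $S$ and valid in the smooth setting (Remark \ref{remark-replace-by-smooth-main-affine-theorem}): each column $\mathcal{K}^{n,\bullet}$ is quasi-isomorphic, via the structure maps $T(n)\to T$, to $\mathcal{K}^{0,\bullet} = \mathcal{M}\otimes_{\mathcal{O}_T}\Omega^\bullet_T$, with the induced horizontal transition maps alternating between $0$ and the identity. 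The associated filtration spectral sequence therefore degenerates and identifies $\mathrm{Tot}(\mathcal{K}^{\bullet,\bullet}) \simeq \mathcal{M}\otimes_{\mathcal{O}_T}\Omega^\bullet_T$ in $D(S_{zar})$, so that $R\Gamma(S_{zar},\mathrm{Tot}(\mathcal{K})) \simeq \mathbb{H}(T,\mathcal{M}\otimes_{\mathcal{O}_T}\Omega^\bullet_T)$, the left-hand side of the theorem.

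For the rows I would invoke Theorem \ref{theorem-global}. The key point is that the $m$-th row complex of sheaves $\mathcal{K}^{\bullet,m}$ computes $Ru_{S/\Sigma}(\mathcal{F}\otimes_{\mathcal{O}_{S/\Sigma}}\Omega^m_{S/\Sigma})$ in $D(S_{zar})$: this is the sheafification of Lemma \ref{lemma-cech-improved}, following from cohomological descent (Remark \ref{remark-cech-topos}) for the effective epimorphism $\colim_e (S,T(0)_e,\delta)\to \ast$, together with the vanishing of higher cohomology of quasi-coherent sheaves on affines and the $R^1\lim_e$-vanishing recorded after Lemma \ref{lemma-cech}. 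Granting this, Theorem \ref{theorem-global} forces $\mathcal{K}^{\bullet,m}\simeq Ru_{S/\Sigma}(\mathcal{F}\otimes\Omega^m)=0$ for $m>0$, while $\mathcal{K}^{\bullet,0}\simeq Ru_{S/\Sigma}\mathcal{F}$. Filtering $\mathrm{Tot}(\mathcal{K})$ by the de Rham degree then collapses it onto the single surviving row, giving $\mathrm{Tot}(\mathcal{K}^{\bullet,\bullet})\simeq Ru_{S/\Sigma}\mathcal{F}$ in $D(S_{zar})$; applying $R\Gamma(S_{zar},-)$ and using $R\Gamma(S_{zar},Ru_{S/\Sigma}\mathcal{F})\simeq R\Gamma(S/\Sigma,\mathcal{F})$ (the case $U=S$ of the localised crystalline cohomology formula) produces the right-hand side. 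Comparing the two computations yields $\mathbb{H}(T,\mathcal{M}\otimes_{\mathcal{O}_T}\Omega^\bullet_T)\simeq R\Gamma(S/\Sigma,\mathcal{F})$.

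I expect the main obstacle to be the rows step, namely the sheafified form of Lemma \ref{lemma-cech-improved}: one must verify that the naive \v{C}ech--Alexander complex of sheaves built from the thickenings $T(n)$ genuinely computes the derived pushforward $Ru_{S/\Sigma}(\mathcal{F}\otimes\Omega^m)$, rather than merely agreeing with it on global sections over each affine. This requires cohomological descent for the cover $T(\bullet)\to S$ in the localised crystalline topos, together with a check that the resulting identifications are compatible with restriction to Zariski opens so that they promote to an honest quasi-isomorphism in $D(S_{zar})$. Once this is in place, Theorem \ref{theorem-global} makes the vanishing of the higher rows automatic and the remainder of the argument is formal.
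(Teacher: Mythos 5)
Your argument is sound in outline, but it takes a genuinely different route from the paper's. You globalise the entire double-complex argument of Theorem \ref{theorem-compute-by-de-Rham}: you build the \v{C}ech--Alexander simplicial formal scheme $T(\bullet)$ from the powers of $X$, form the double complex of sheaves $\mathcal{K}^{n,m}$ on $S_{zar}$, and compute its totalisation once via the columns (Poincar\'e lemma, giving $\mathcal{M}\otimes_{\mathcal{O}_T}\Omega^\bullet_T$) and once via the rows (sheafified Lemma \ref{lemma-cech-improved} plus the vanishing of Theorem \ref{theorem-global}, giving $Ru_{S/\Sigma}\mathcal{F}$). The paper instead never introduces the global $T(n)$ for $n>0$: it uses Theorem \ref{theorem-global} only to produce a globally defined comparison map, by applying the restriction functor $R\Gamma(\ast,-)\to R\lim_e R\Gamma((S,T_e,\delta),-)$ to the truncation morphism $\mathcal{F}\otimes\Omega^\bullet_{S/\Sigma}\to\mathcal{F}[0]$ and identifying the target with $R\Gamma(T,\mathcal{M}\otimes_{\mathcal{O}_T}\Omega^\bullet_T)$ via an $R\lim$--$R\Gamma$ commutation; it then checks that this map is an isomorphism affine-locally (Theorem \ref{theorem-compute-by-de-Rham} together with Remark \ref{remark-replace-by-smooth-main-affine-theorem}) and concludes by the \v{C}ech spectral sequence for an affine cover. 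The obstacle you correctly single out --- promoting Lemma \ref{lemma-cech-improved} from a statement about global sections over each affine to an identification $\mathcal{K}^{\bullet,m}\simeq Ru_{S/\Sigma}(\mathcal{F}\otimes\Omega^m_{S/\Sigma})$ in $D(S_{zar})$, compatibly with restriction --- is exactly what the paper's strategy is engineered to avoid: since a map only needs to be constructed globally and checked to be an isomorphism locally, no sheaf-level cohomological descent along $T(\bullet)\to S$ is ever required. Your route can be completed (the descent statement does hold, restriction of the divided power envelopes to affine opens reduces it to the smooth-algebra version of Lemma \ref{lemma-cech}), but it costs more foundational care for the same theorem; what it buys is a direct global identification of complexes rather than an a priori map that happens to be an isomorphism.
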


\begin{proof}
First, we construct the map. By basic formal scheme theory, we have a formula
\begin{eqnarray*}
R\lim_{e \geq N} R\Gamma( (S,T_e,\delta), \mathcal{F} \otimes_{\mathcal{O}_{S/\Sigma}} \Omega^i_{S/\Sigma}) &=& R\lim_{e \geq N} R\Gamma(T_e,\mathcal{F}_{T_e} \otimes_{\mathcal{O}_{T_e}} \Omega^i_{T_e})  \\
&=& R\Gamma(T,R\lim_{e \geq N} \big(\mathcal{F}_{T_e} \otimes_{\mathcal{O}_{T_e}} \Omega^i_{T_e} \big)) \\
&=&  R\Gamma(T,\mathcal{M} \otimes_{\mathcal{O}_T} \Omega^i_T)
\end{eqnarray*}
for each $i \geq 0$. Here the first equality follows from the definition of cohomology in the crystalline site; the second equality follows from the identification of the (derived functors of the) composite functors
$$
\mathrm{Ab}(\mathcal{C})^{\mathbf{N}} \stackrel{\Gamma(T,-)}{\to} \mathrm{Ab}^{\mathbf{N}} \stackrel{\lim}{\to} \mathrm{Ab} \quad \textrm{and} \quad
\mathrm{Ab}(\mathcal{C})^{\mathbf{N}} \stackrel{\lim}{\to} \mathrm{Ab}(\mathcal{C}) \stackrel{\Gamma(T,-)}{\to} \mathrm{Ab};
$$
the last equality follows from the vanishing of 
$$
R^j \lim_{e \geq N} \big(\mathcal{F}_e \otimes_{\mathcal{O}_{T_e}} \Omega^i_{T_e}\big)
$$
for all $j > 0$, which follows from the vanishing of higher quasi-coherent sheaf cohomology for affines. Using this formula, and applying 
$$
R\Gamma(*, -) \longrightarrow R\lim_{e \geq N} R\Gamma((S, T_e, \delta), -)
$$
to the morphism in Theorem \ref{theorem-global}, gives the desired map
$$
R\Gamma(S/\Sigma,\mathcal{F}) \to R\Gamma(T,\mathcal{M} \to \mathcal{M} \otimes_{\mathcal{O}_T} \Omega^1_T \to \mathcal{M} \otimes_{\mathcal{O}_T} \Omega^2_T \to \cdots).
$$
Moreover, this map is an isomorphism for affine $S$ by Theorem \ref{theorem-compute-by-de-Rham}  (see  Remark \ref{remark-replace-by-smooth-main-affine-theorem}), and is functorial in $S$. The \v{C}ech spectral sequence for an affine open cover then immediately implies the claim for $X$ is quasi-compact and separated (as the $E_2$ terms involve cohomology of affines). For an $X$ only assumed to be quasi-compact and quasi-separated, another application of the spectral sequence for an affine open cover finishes the proof (as the $E_2$ terms involve cohomology on quasi-compact and separated schemes). Since all smooth finitely presented $\Sigma$-schemes are quasi-compact and quasi-separated, we are done.
\end{proof}

\begin{remark}
The arguments that go into proving Theorem \ref{theorem-global-cohomology} also apply {\em mutatis mutandis} to reprove Grothendieck's comparison theorem from \cite{GrothendieckCrystals}: if $S$ is a variety over $\mathbf{C}$, and $S \subset X$ is a closed immersion into a smooth variety, and $T$ denotes the formal completion of $X$ along $S$, then the cohomology of the structure sheaf on the infinitesimal site $(S/\mathrm{Spec}(\mathbf{C}))_{inf}$ is computed by the hypercohomology on $S$ of the de Rham complex of $T$ (defined suitably). The only essential change is that the proof of Lemma \ref{lemma-poincare}, which relies on the vanishing of the higher de Rham cohomology of a divided power polynomial algebra, must be replaced by its formal analogue, i.e., the vanishing of the higher de Rham cohomology of a formal power series ring in characteristic $0$.
\end{remark}

\noindent
In certain situations, Theorem \ref{theorem-global-cohomology} can be algebraised to get a statement about classical schemes. For example:

\begin{corollary}
\label{corollary-formal-functions}
Let $f:X \to \Sigma$ be a proper smooth morphism, and set 
$S = X \times_{\Sigma} \mathrm{Spec}(\mathbf{F}_p)$. Then the hypercohomology
of the de Rham complex 
$$
\mathcal{O}_X \to
\Omega^1_{X/\Sigma} \to
\Omega^2_{X/\Sigma} \to \cdots,
$$
computes $R\Gamma(S/\Sigma,\mathcal{O}_{S/\Sigma})$. In particular, the de Rham 
cohomology of $X \to \Sigma$ is determined functorially by the fibre $S \hookrightarrow X$ of $f$, and thus admits a Frobenius action.
\end{corollary}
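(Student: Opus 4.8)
The plan is to apply Theorem~\ref{theorem-global-cohomology} to the crystal $\mathcal{F} = \mathcal{O}_{S/\Sigma}$ together with the closed immersion $i : S \hookrightarrow X$, and then to algebraise the resulting statement on the formal completion using properness of $f$. Note first that the hypotheses of Notation~\ref{notation-global} hold: the scheme $X$ is smooth and (being proper over the Noetherian base $\Sigma$) finitely presented, $p$ vanishes on $S$, and $S = X \times_\Sigma \mathrm{Spec}(\mathbf{F}_p) \hookrightarrow X$ is the closed subscheme cut out by the ideal $p\mathcal{O}_X$.

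The crucial observation is that this ideal $p\mathcal{O}_X$ already carries a canonical divided power structure, namely the one inherited from the standard divided powers $\gamma_n(p) = p^n/n!$ on $p\mathbf{Z}_p$ (these elements being $p$-integral). Hence the divided power hull $\mathcal{D}_e$ of $\mathcal{O}_X/p^e \to \mathcal{O}_S$ from Notation~\ref{notation-global} is simply $\mathcal{O}_X/p^e$ with its canonical divided powers, and the associated $p$-adic formal scheme $T = \lim_{e} \mathcal{D}_e$ is nothing but the $p$-adic formal completion $\widehat{X}$ of $X$ along $S$. Under this identification $\mathcal{M} = \mathcal{O}_{\widehat X}$, the sheaves $\Omega^i_T$ become the $p$-adically completed differentials $\Omega^i_{\widehat X/\Sigma}$, and $\nabla$ is the usual exterior derivative. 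Theorem~\ref{theorem-global-cohomology} therefore gives a canonical isomorphism
\[
R\Gamma(S/\Sigma, \mathcal{O}_{S/\Sigma}) \simeq R\Gamma(\widehat X, \Omega^\bullet_{\widehat X/\Sigma}).
\]

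It remains to replace the formal completion $\widehat X$ by $X$, and this is the only place where properness enters. Each $\Omega^i_{X/\Sigma}$ is coherent, so $\Omega^\bullet_{X/\Sigma}$ is a bounded complex of coherent sheaves whose hypercohomology groups are finitely generated $\mathbf{Z}_p$-modules by properness of $f$; being finite over the complete ring $\mathbf{Z}_p$, they are automatically $p$-adically complete. On the other hand, the theorem on formal functions (Grothendieck's comparison theorem for the proper map $f$) identifies $R\Gamma(X, \Omega^i_{X/\Sigma})^\wedge_p$ with $R\Gamma(\widehat X, \Omega^i_{\widehat X/\Sigma})$ for each $i$, and hence identifies the hypercohomology of the two de Rham complexes (e.g., via the hypercohomology spectral sequence, or by d\'evissage on the bounded complex). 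Since the algebraic side is already $p$-complete, this yields
\[
R\Gamma(X, \Omega^\bullet_{X/\Sigma}) \simeq R\Gamma(\widehat X, \Omega^\bullet_{\widehat X/\Sigma}) \simeq R\Gamma(S/\Sigma, \mathcal{O}_{S/\Sigma}),
\]
which is the first assertion. I expect the main point requiring care to be exactly this passage: upgrading the termwise formal-functions isomorphism for coherent sheaves to a quasi-isomorphism of the total de Rham complexes, and verifying the $p$-completeness needed to strip the completion off the algebraic side. By contrast, the divided-power computation identifying $T$ with $\widehat X$ is the conceptually pleasant but elementary step.

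Finally, the ``in particular'' is formal. By construction $R\Gamma(S/\Sigma, \mathcal{O}_{S/\Sigma})$ depends only on the $\mathbf{F}_p$-scheme $S$ through its crystalline site $(S/\Sigma)_{cris}$, and makes no reference to the lift $X$; the isomorphism above therefore exhibits the de Rham cohomology of $X/\Sigma$ as an invariant of $S$ that is functorial in $S$. Applying this functoriality to the absolute Frobenius $F_S : S \to S$ (a morphism of $\mathbf{F}_p$-schemes) produces an endomorphism of $R\Gamma(S/\Sigma, \mathcal{O}_{S/\Sigma})$, and transporting it across the comparison isomorphism endows the de Rham cohomology of $X/\Sigma$ with the desired Frobenius action.
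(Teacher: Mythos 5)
Your proposal is correct and follows essentially the same route as the paper: apply Theorem \ref{theorem-global-cohomology} with the embedding $S \hookrightarrow X$, observe that the divided power hulls $\mathcal{D}_e$ are just $\mathcal{O}_X/p^e$ because the ideal $(p)$ already carries divided powers, so $T$ is the $p$-adic completion of $X$, and then invoke the formal functions theorem to pass from the formal completion back to $X$. The paper states this in one sentence; your elaborations (the $p$-completeness of the finitely generated cohomology groups, the d\'evissage from termwise formal functions to the total de Rham complex, and the Frobenius functoriality) are all correct fillings-in of the same argument.
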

\begin{proof}
This follows from Theorem \ref{theorem-global-cohomology} and the formal functions theorem as $T$ is just the $p$-adic completion of $X$: the ideal $\mathrm{ker}(\mathcal{O}_X \to \mathcal{O}_S) = (p) \subset \mathcal{O}_X$ already has specified divided powers, so $\mathcal{O}_X/p^e = \mathcal{D}_e$ for any $e$.
\end{proof}

\begin{remark}
One can upgrade Corollary \ref{corollary-formal-functions} to a statement 
that incorporates coefficients as follows. There is an equivalence of 
categories between crystals in quasi-coherent sheaves on $(S/\Sigma)$ and quasi-coherent sheaves
on $X$ equipped with a flat connection relative to $\Sigma$ (see 
Lemma \ref{lemma-category-crystals-quasi-coherent} and Corollary \ref{corollary-reduce-modulo-p}). This equivalence respects 
cohomology, i.e., the crystalline cohomology of a crystal in quasi-coherent sheaves on 
$S/\Sigma$ is computed by the de Rham cohomology of the corresponding quasi-coherent sheaf 
with flat connection on $X$.
\end{remark}

\noindent
We conclude with a brief discussion of the base change behaviour. For an $\mathbf{Z}/p^N$-scheme $S$ as above, a natural question is whether crystalline cohomology relative to $\Sigma_e = \mathrm{Spec}(\mathbf{Z}/p^e)$ (for $e \geq N$) can be recovered from crystalline cohomology relative to $\Sigma$ via (derived) base change along $\mathbf{Z}_p \to \mathbf{Z}/p^e$. In general, the answer is ``no,'' see Example \ref{example-bo-torsion}. However, under suitable flatness conditions, the answer is ``yes'':

\begin{corollary}
\label{corollary-base-change}
Let $S \to X, T$ and $\mathcal{F}$ be as in Notation \ref{notation-global} and the following discussion.  Assume that for each $e \geq N$, the $\mathcal{O}_{T_e}$-module $\mathcal{F}_{T_e}$ is flat over $\mathbf{Z}/p^e$. Then for each $e \geq N$, we have a base change isomorphism
$$
\mathbf{Z}/p^e \otimes_{\mathbf{Z}}^L R\Gamma(S/\Sigma,\mathcal{F}) \simeq R\Gamma(S/\Sigma_e,\mathcal{F}|_{S/\Sigma_e}),
$$
where $\mathcal{F}|_{S/\Sigma_e}$ denotes the restriction of $\mathcal{F}$ along $(S/\Sigma_e)_{cris} \subset (S/\Sigma)_{cris}$.
\end{corollary}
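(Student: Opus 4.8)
The plan is to reduce everything to the de Rham complex computations furnished by Theorem \ref{theorem-global-cohomology}, and then to exploit the flatness hypothesis in order to commute the derived base change $\mathbf{Z}/p^e \otimes^L_{\mathbf{Z}} (-)$ past the formation of hypercohomology on $T$. The whole point is that once each term of the de Rham complex on $T$ is seen to be $p$-torsion-free, the derived base change degenerates to an honest reduction modulo $p^e$.

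First I would record the two de Rham computations. By Theorem \ref{theorem-global-cohomology}, $R\Gamma(S/\Sigma, \mathcal{F})$ is the hypercohomology on $T$ of the de Rham complex $\mathrm{DR}_T(\mathcal{M}) = (\mathcal{M} \to \mathcal{M} \otimes_{\mathcal{O}_T} \Omega^1_T \to \cdots)$. The arguments proving Theorem \ref{theorem-global-cohomology}, fed by the affine statement of Remark \ref{remark-affine-mod-p^e} in place of Theorem \ref{theorem-compute-by-de-Rham} together with the \v{C}ech spectral sequence for an affine cover, give the evident mod-$p^e$ analogue: $R\Gamma(S/\Sigma_e, \mathcal{F}|_{S/\Sigma_e})$ is the hypercohomology on $T_e$ of the (honest, uncompleted) de Rham complex $\mathrm{DR}_{T_e}(\mathcal{F}_{T_e}) = (\mathcal{F}_{T_e} \to \mathcal{F}_{T_e}\otimes_{\mathcal{O}_{T_e}} \Omega^1_{T_e} \to \cdots)$. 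Since $T_e$ and $T$ share the underlying space $S$, pushforward along the closed immersion $T_e \hookrightarrow T$ does not change cohomology, so I may compute the latter as $R\Gamma(T, \mathrm{DR}_{T_e}(\mathcal{F}_{T_e}))$.

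The heart of the matter is the termwise identity
\begin{equation*}
\mathbf{Z}/p^e \otimes^L_{\mathbf{Z}} \big( \mathcal{M} \otimes_{\mathcal{O}_T} \Omega^i_T \big) \simeq \mathcal{F}_{T_e} \otimes_{\mathcal{O}_{T_e}} \Omega^i_{T_e}.
\end{equation*}
Here is where flatness enters. Because $X$ is smooth over $\Sigma$, each $\Omega^i_{T_e}$ is a finite locally free $\mathcal{O}_{T_e}$-module and $\Omega^i_T = \lim_e \Omega^i_{T_e}$ is finite locally free over $\mathcal{O}_T$; hence $\mathcal{M} \otimes_{\mathcal{O}_T} \Omega^i_T$ is, Zariski-locally on $S$, a finite direct sum of copies of $\mathcal{M} = \lim_e \mathcal{F}_{T_e}$. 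A short check using that each $\mathcal{F}_{T_e}$ is flat over $\mathbf{Z}/p^e$ (so that $\mathcal{F}_{T_e}[p] = p^{e-1}\mathcal{F}_{T_e}$, while $\mathcal{F}_{T_{e+1}}/p^e = \mathcal{F}_{T_e}$ by Lemma \ref{lemma-p-adic-limit}) shows that $\mathcal{M}$ is $p$-torsion-free with $\mathcal{M}/p^e = \mathcal{F}_{T_e}$; the same then holds for each term $\mathcal{M}\otimes_{\mathcal{O}_T} \Omega^i_T$, whose reduction modulo $p^e$ is $\mathcal{F}_{T_e} \otimes_{\mathcal{O}_{T_e}}\Omega^i_{T_e}$. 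Computing $\mathbf{Z}/p^e \otimes^L_{\mathbf{Z}}(-)$ with the resolution $[\mathbf{Z}\xrightarrow{p^e}\mathbf{Z}]$ of $\mathbf{Z}/p^e$, the $p$-torsion-freeness kills the potential $\mathrm{Tor}_1$ contribution $\mathcal{M}[p^e]$, yielding the displayed identity. As $i$ varies these identifications are compatible with the de Rham differentials, so $\mathbf{Z}/p^e \otimes^L_{\mathbf{Z}} \mathrm{DR}_T(\mathcal{M}) \simeq \mathrm{DR}_{T_e}(\mathcal{F}_{T_e})$ as complexes of sheaves on $S$ (both are bounded, since $\Omega^i_T = 0$ once $i$ exceeds the relative dimension of $X/\Sigma$).

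Finally I would descend this to cohomology. Since $\mathbf{Z}/p^e \otimes^L_{\mathbf{Z}}(-)$ is the cone of multiplication by $p^e$ and $R\Gamma(T,-)$ is triangulated, the two operations commute; combining this with the previous step and the two de Rham computations gives
\begin{equation*}
\mathbf{Z}/p^e \otimes^L_{\mathbf{Z}} R\Gamma(S/\Sigma, \mathcal{F}) \simeq R\Gamma(T, \mathbf{Z}/p^e \otimes^L_{\mathbf{Z}} \mathrm{DR}_T(\mathcal{M})) \simeq R\Gamma(T, \mathrm{DR}_{T_e}(\mathcal{F}_{T_e})) \simeq R\Gamma(S/\Sigma_e, \mathcal{F}|_{S/\Sigma_e}).
\end{equation*}
One should check that the resulting isomorphism agrees with the natural base-change map induced by restriction along $(S/\Sigma_e)_{cris} \subset (S/\Sigma)_{cris}$; this holds because every comparison isomorphism used above is compatible with reduction modulo $p^e$. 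I expect the only genuine obstacle to lie in the termwise identity of the third paragraph, namely verifying that the flatness of the $\mathcal{F}_{T_e}$ forces $\mathcal{M}$, and hence each de Rham term, to be $p$-torsion-free so that no higher Tor intervenes. Once this is established, the remainder is the formal bookkeeping of commuting a finite colimit past $R\Gamma$.
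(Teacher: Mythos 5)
Your proposal is correct and follows essentially the same route as the paper's proof: both reduce to the de Rham complex on $T$ via Theorem \ref{theorem-global-cohomology}, use flatness of the $\mathcal{F}_{T_e}$ to see $\mathcal{M}$ is $p$-torsion-free so the derived reduction mod $p^e$ of each (flat) de Rham term is just $\mathcal{F}_{T_e}\otimes_{\mathcal{O}_{T_e}}\Omega^i_{T_e}$, commute $\mathbf{Z}/p^e\otimes^L_{\mathbf{Z}}(-)$ past $R\Gamma(T,-)$, and conclude by the mod-$p^e$ version of the comparison theorem (Remark \ref{remark-affine-mod-p^e}). Your write-up merely spells out a few steps (the $p$-torsion-freeness check, the local freeness of $\Omega^i_T$) that the paper leaves implicit.
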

\begin{proof}
By Theorem \ref{theorem-global-cohomology}, we have
$$
R\Gamma(S/\Sigma,\mathcal{F}) \simeq R\Gamma(T,
\mathcal{M} \to
\mathcal{M} \otimes_{\mathcal{O}_T} \Omega^1_T \to 
\mathcal{M} \otimes_{\mathcal{O}_T} \Omega^2_T \to \cdots).
$$
Since each $\Omega^i_T$ is flat over $\mathcal{O}_T$, the tensor products $\mathcal{M} \otimes_{\mathcal{O}_T} \Omega^i_T$ appearing in the de Rham complex above are automatically derived tensor products. Applying $\mathbf{Z}/p^e \otimes^L_{\mathbf{Z}} -$ (and observing that this operation commutes with applying $R\Gamma(T,-)$) then shows that $\mathbf{Z}/p^e \otimes^L_{\mathbf{Z}} R\Gamma(S/\Sigma, \mathcal{F})$ is computed as the hypercohomology on $T$ of the complex 
$$
K := \Big( \mathbf{Z}/p^e \otimes^L_{\mathbf{Z}} \mathcal{M}   \to
 \mathbf{Z}/p^e \otimes^L_{\mathbf{Z}} \mathcal{M} \otimes_{\mathcal{O}_T} \Omega^1_T \to 
 \mathbf{Z}/p^e \otimes^L_{\mathbf{Z}}  \mathcal{M} \otimes_{\mathcal{O}_T} \Omega^2_T \to \cdots\Big).
$$
The flatness assumption on $\mathcal{F}$ implies that $\mathcal{M}$ is flat over $\mathbf{Z}_p$. Lemma \ref{lemma-p-adic-limit} then shows that
$$
 \mathbf{Z}/p^e \otimes^L_{\mathbf{Z}} \mathcal{M} \simeq \mathbf{Z}/p^e \otimes_{\mathbf{Z}} \mathcal{M} \simeq \mathcal{F}_{T_e},
$$
and so
$$
 \mathbf{Z}/p^e \otimes^L_{\mathbf{Z}} \mathcal{M} \otimes_{\mathcal{O}_T} \Omega^i_T \simeq  \mathcal{F}_{T_e} \otimes_{\mathcal{O}_{T_e}} \Omega^i_{T_e}.
$$
In other words, the complex $K$ of sheaves appearing above is identified with the de Rham complex of the $\mathcal{O}_{T_e}$-module $\mathcal{F}_{T_e}$. The claim now follows from the modulo $p^e$ version of Theorem \ref{theorem-global-cohomology} (see also Remark \ref{remark-affine-mod-p^e}).
\end{proof}

\noindent
The hypotheses of Corollary \ref{corollary-base-change} are satisfied, for example, when $S$ is a flat local complete intersection over $\mathbf{Z}/p^N$, and $\mathcal{F}$ is a crystal in {\em locally free} quasi-coherent $\mathcal{O}_{S/\Sigma}$-modules; the smooth case is discussed in \cite[\S V.3.5]{Berthelot}. Moreover, there are extremely simple examples illustrating the sharpness of the lci assumption:

\begin{example}
\label{example-bo-torsion}
Let $S = \mathbf{F}_p[x,y]/(x^2,xy,y^2)$. In \cite[Appendix (A.2)]{BerthelotOgus}, Berthelot-Ogus exhibit a non-zero $p$-torsion class $\tau \in H^0(S/\Sigma,\mathcal{O}_{S/\Sigma})$ by constructing a non-zero $p$-torsion $\nabla$-horizontal element of the $p$-adically completed divided power envelope of the natural surjection $\mathbf{Z}_p[x,y] \to \mathbf{F}_p[x,y]/(x^2,xy,y^2)$. Via the exact triangle 
$$
R\Gamma(S/\Sigma,\mathcal{O}_{S/\Sigma}) \stackrel{p}{\to} R\Gamma(S/\Sigma,\mathcal{O}_{S/\Sigma}) \to R\Gamma(S/\Sigma,\mathcal{O}_{S/\Sigma}) \otimes^L_{\mathbf{Z}} \mathbf{Z}/p,
$$
$\tau$ defines a non-zero class in $H^{-1}\Big(R\Gamma(S/\Sigma,\mathcal{O}_{S/\Sigma}) \otimes^L_{\mathbf{Z}} \mathbf{Z}/p\Big)$. In particular, $R\Gamma(S/\Sigma,\mathcal{O}_{S/\Sigma}) \otimes^L_{\mathbf{Z}} \mathbf{Z}/p$ has cohomology in negative degrees, so it cannot be equivalent to $R\Gamma(S/\Sigma_1,\mathcal{O}_{S/\Sigma_1})$ (or the cohomology of any sheaf on any site).
\end{example}

\bibliography{my}
\bibliographystyle{amsalpha}

\end{document}